\documentclass[11pt]{article}

\usepackage[margin=1in]{geometry}
\usepackage{amsmath,amssymb,amsthm,mathtools}
\usepackage{graphicx}
\usepackage{subcaption}
\usepackage{booktabs}
\usepackage{hyperref}
\usepackage{enumitem}
\usepackage{xcolor}
\usepackage{microtype}
\usepackage{float}

\hypersetup{colorlinks=true,linkcolor=blue,citecolor=blue,urlcolor=blue}

\newtheorem{proposition}{Proposition}

\theoremstyle{definition}
\newtheorem{definition}{Definition}

\newtheorem{example}{Example}

\theoremstyle{remark}
\newtheorem{remark}{Remark}

\title{On the Sampling of Certain Dirichlet and Prime Sums at Gram Points and their Viscosity}
\author{Yochay Jerby}
\date{July 2026}

\begin{document}
\maketitle
\begin{abstract}
Consider the Dirichlet polynomial and its prime-indexed sub-sum
\[
\begin{array}{ccc}
E(t)
:=
\sum_{k\le \left[ \sqrt{ \frac{t}{2\pi} } \right]}
\frac{e^{it\log k}}{\sqrt{k}},
& ; & E_1(t)
:=
\sum_{\substack{p\le \left[ \sqrt{\frac{t}{2\pi}} \right]\\ p\ {\rm prime}}}
\frac{e^{it\log p}}{\sqrt p}.
\end{array} \]
In this work we investigate the statistical and arithmetic properties of the sets
\[
\mathcal{C}_N
:=
\left\{
E(g_n):1\le n\le N
\right\},
\qquad
\mathcal{P}_N
:=
\left\{
E_1(g_n):1\le n\le N
\right\},
\]
obtained by sampling these sums at the Gram points \(g_n\) of the Hardy \(Z\)-function. We refer to \(\mathcal{C}_N\) and \(\mathcal{P}_N\) as the full and prime \(N\)-th Gram clouds, respectively. The motivation is that the approximate functional equation expresses both \(Z(g_n)\) and \(Z'(g_n)\) through different projections of the same oscillatory phase configuration, suggesting that the geometry of these sampled sums may retain information about the local behavior of nearby zeros. Numerically, we find that Gram-point sampling produces highly non-generic geometric structures: good and isolated bad Gram points occupy markedly different regions of the clouds, the prime cloud of isolated bad points splits into two anisotropic lobes, and the position along the principal direction of the folded prime cloud is strongly correlated with the viscosity of the Hardy $Z$-function
\(
\mu(g_n)=\left|\frac{Z'(g_n)}{Z(g_n)}\right|,
\)
with smallest-viscosity points concentrating near distinguished extremal regions of the lobes.
\end{abstract}
\tableofcontents

\section{Introduction} 

\subsection{The Full and Prime Gram Clouds} 

Recall that the Riemann--Siegel theta function is
\[
\theta(t)
:=
\Im\log\Gamma\!\left(\frac14+\frac{it}{2}\right)
-\frac{t}{2}\log\pi .
\] 
By Stirling's formula,
\[
\theta(t)
=
\frac{t}{2}\log\!\left(\frac{t}{2\pi}\right)
-\frac{t}{2}
-\frac{\pi}{8}
+O\!\left(\frac1t\right),
\qquad t\to\infty.
\]
The \(n\)-th Gram point \(g_n\) is defined by
\[
\theta(g_n)=n\pi.
\]

Gram \cite{Gram1903} observed that the zeros of \(Z(t)\) tend to
alternate with the Gram points and that the signs of \(Z(t)\) at these points
usually obey the rule
\begin{equation}
\label{eq:Gram}
        (-1)^n Z(g_n)>0.
\end{equation} 
This empirical sign rule is known as Gram's law. Since \(Z(t)\) is real on the
real axis, if two consecutive Gram points \(g_n,g_{n+1}\) both satisfy Gram's
law, then the values \(Z(g_n)\) and \(Z(g_{n+1})\) have opposite signs, and
there must be a zero of \(Z(t)\) in the Gram interval \([g_n,g_{n+1}]\).  If
Gram's law were true for all \(n\), this would give an extremely simple
mechanism forcing the zeros of \(Z(t)\) to be real.

However, Gram's law is not always true.  The first violations were discovered
by Hutchinson in 1925, who found that the Gram points \(g_{126}\) and \(g_{137}\)
fail the expected sign rule \cite{Hutchinson}.  We hence refer to \(g_n\) as a
\emph{good Gram point} if it satisfies Gram's law \eqref{eq:Gram}, and as a
\emph{bad Gram point} if it fails. We refer to a bad Gram point $g_n$ as \emph{isolated} if its neighbours $g_{n \pm 1}$ are good. A modern experimental study of the relative frequency of good and bad Gram
points was carried out by Shanker~\cite{ShankerGramRatio}.

Although Gram's law \eqref{eq:Gram} is usually viewed as a numerical phenomenon, in 
\cite{JerbyDiscriminant} we proposed a variational interpretation according to which good and bad Gram points reflect the
local discriminant geometry controlling possible collisions of neighboring
zeros.  In this sense, failures of Gram's law are tied to the way the
Riemann hypothesis is encoded into the variational approach.

 Consider the Dirichlet polynomial
\[
E(t)
:=
\sum_{k\le \left[ \sqrt{ \frac{t}{2\pi} } \right]}
\frac{e^{it\log k}}{\sqrt{k}},
\]
and its prime-indexed sub-sum
\[
E_1(t)
:=
\sum_{\substack{p\le \left[ \sqrt{\frac{t}{2\pi}} \right]\\ p\ {\rm prime}}}
\frac{e^{it\log p}}{\sqrt p}.
\]
Figure \ref{fig:continuous_clouds} shows the values of $E(t)$ (left) and $E_1(t)$ (right) for
    \(
    0<t<g_{250000}     \) in the complex plane:
    
\begin{figure}[htbp]
    \centering
    \includegraphics[width=0.65 \textwidth]{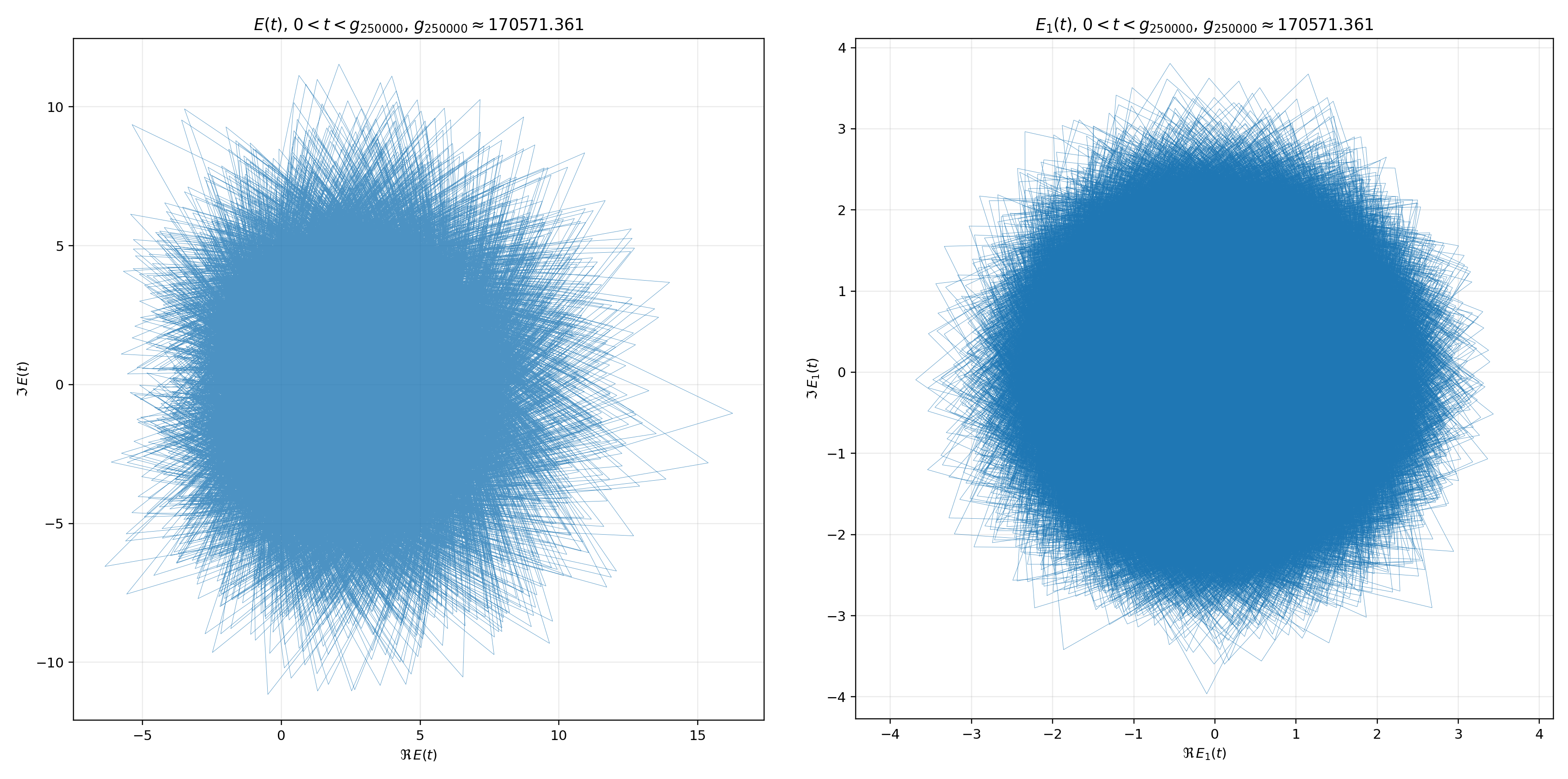}
    \caption{the values of $E(t)$ (left) and $E_1(t)$ (right) for
    \(
    0<t<g_{250000} \approx 170571.36 \)}
    \label{fig:continuous_clouds}
\end{figure}

We have:

\begin{definition}[Full and prime Gram clouds]
For \(N\in\mathbb{N}\), define
\begin{equation}
\label{eq:clouds}
\mathcal{C}_N
:=
\left\{
E(g_n):1\le n\le N
\right\},
\qquad
\mathcal{P}_N
:=
\left\{
E_1(g_n):1\le n\le N
\right\}.
\end{equation}
These sets are obtained by sampling \(E(t)\) and \(E_1(t)\), respectively,
at the Gram points \(g_n\) of the Hardy \(Z\)-function. We refer to
\(\mathcal{C}_N\) and \(\mathcal{P}_N\) as the full and prime \(N\)-th Gram
clouds, respectively.
\end{definition}

 The aim of this work is to investigate the statistical and arithmetic properties of the full and prime cloud sets \(\mathcal{C}_N\) and \(\mathcal{P}_N\).   Figure \ref{fig:Gram_clouds} compares the full AFE cloud with its
prime-indexed component for \(0<t<g_{250000}\).  

\begin{figure}[htbp]
   \centering
   \includegraphics[width=0.65 \textwidth]{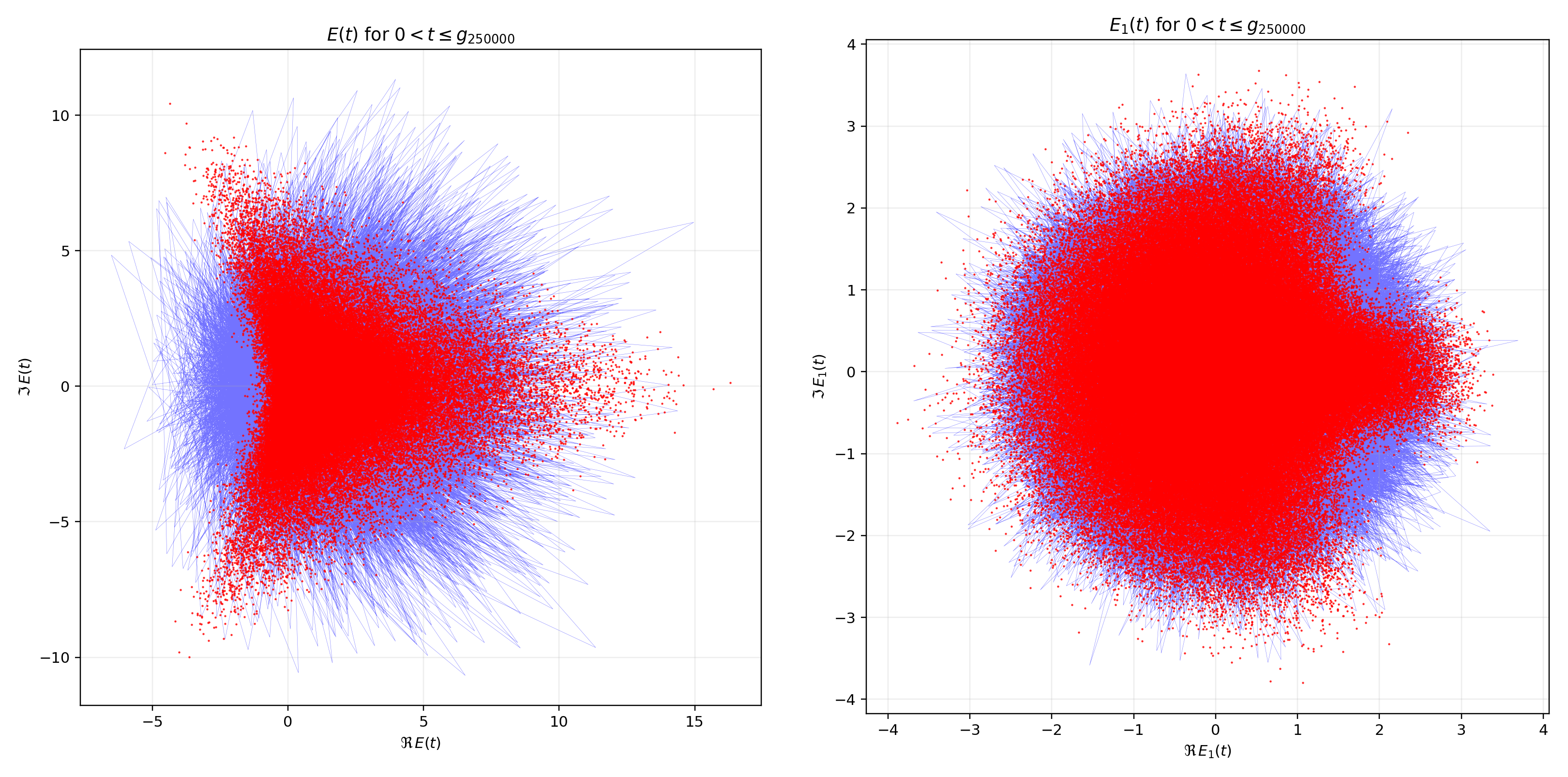}
   \caption{The full cloud and its prime component for
   \(0<t<g_{250000}\).  The left panel shows \(E(t)\) and the full cloud $\mathcal{C}_N$, while
   the right panel shows \(E_1(t)\) and the prime cloud $\mathcal{P}_N$.  Blue denotes values
   over the sampled \(t\)-range, and red denotes the clouds of Gram
   points samplings at $g_n$.}
   \label{fig:Gram_clouds}
\end{figure}

In each panel, the blue
cloud represents the values obtained over the sampled \(t\)-range, while the
red points indicate the corresponding values at the Gram points \(t=g_n\).
The left panel shows the full sum \(E(t)\) together with $\mathcal{C}_N$, and the right panel its prime
component \(E_1(t)\) together with $\mathcal{P}_N$.

Figure \ref{fig:Gram_clouds} shows that sampling at the Gram points is far
from neutral and that the sets \(\mathcal C_N\) and \(\mathcal P_N\) do not simply
reproduce the ambient clouds traced by \(E(t)\) and \(E_1(t)\) as \(t\)
varies continuously.  Rather, the Gram-point samples concentrate on highly
structured subsets of those clouds, with a pronounced anisotropy and
directional organization that is especially visible in the prime component.
Thus the condition $t=g_n$
appears to impose a strong geometric selection on the underlying phase
configuration.

\subsection{The Split of the Full and Prime Clouds to their Good and Bad Parts} 

Recall that the value of \(Z(g_n)\) and $Z'(g_n)$ at a Gram point $g_n$ can be evaluated using the approximate functional equation (AFE). Concretely, the formula gives 
\begin{equation}
\label{eq:afe_value}
        Z(g_n)
        =
        2(-1)^n
        \sum_{k\leq N(g_n)}
        \frac{\cos(g_n\log k)}{\sqrt{k}}
        +
        O\!\left(g_n^{-1/4}\right).
\end{equation}
where \(        N(t)
        =
        \left\lfloor
        \sqrt{\frac{t}{2\pi}}
        \right\rfloor 
\). The derivative is obtained from the same phase vector, but with the
additional logarithmic weight
\(\log\!\left(g_n/(2\pi k^2)\right)\):
\[
        Z'(g_n)
        =
        (-1)^n
        \Im\left(
        \sum_{k\leq N(g_n)}
        \log\!\left(\frac{g_n}{2\pi k^2}\right)
        \frac{e^{ig_n\log k}}{\sqrt{k}}
        \right)
        +
        O\!\left(g_n^{-1/4}\log^2 g_n\right),
\]
see Eq.~(4.5) and Eq.~(5.7) of \cite{IvicZ}. 
Thus, \(E(g_n)\) records the underlying complex configuration of the
oscillatory phases appearing simultaneously in the AFE expressions for
\(Z(g_n)\) and \(Z'(g_n)\). The value \(Z(g_n)\) is governed by its
unweighted real projection, whereas the derivative $Z'(g_n)$ is governed by a
logarithmically weighted imaginary projection. Figure~\ref{fig:afe_cloud} displays the full cloud $\mathcal{C}_N$ of \(E(g_n)\) for the \(N=250{,}000\)
        consecutive Gram points \(1\leq n\leq250000\) with Good Gram points shown in blue and isolated bad Gram points are shown in red 

\begin{figure}[htbp]
        \centering
        \includegraphics[width=0.5 \textwidth]{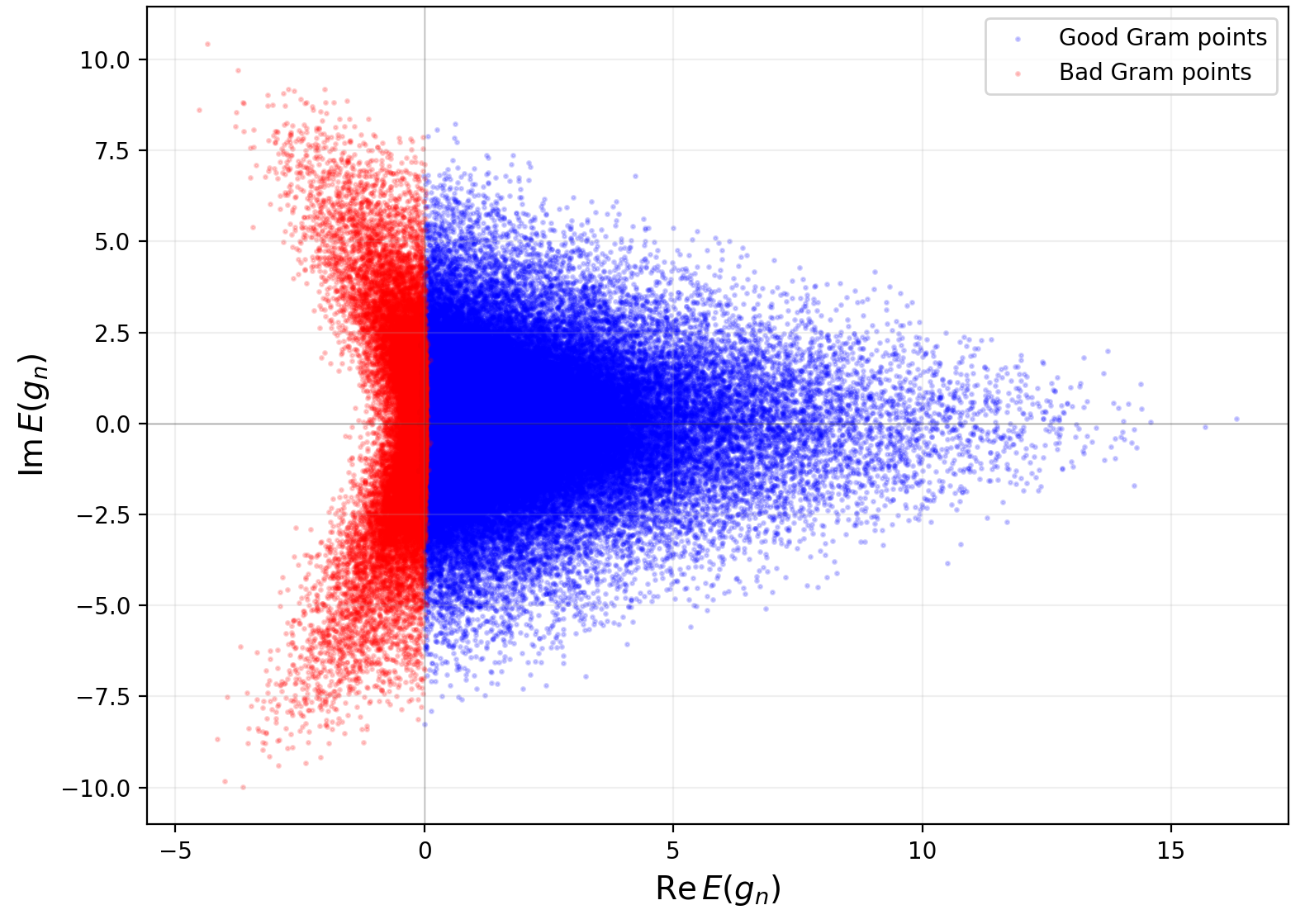}
        \caption{The full cloud $\mathcal{C}_N$ of \(E(g_n)\) for the \(N=250{,}000\)
        consecutive Gram points \(1\leq n\leq250000\). Good Gram points are
        shown in blue and bad Gram points in red. }
        \label{fig:afe_cloud}
\end{figure}

The horizontal separation between good and bad Gram points, visible in Figure~\ref{fig:afe_cloud}, follows from the fact that from 
\eqref{eq:afe_value}, up to the AFE error, one has 
\[
        Z(g_n)
        =
        2(-1)^n\Re E(g_n)
        +
        O\!\left(g_n^{-1/4}\right). \]
        In particular, $\Re E(g_n)>0$ for good Gram points and $\Re E(g_n)<0$ for bad Gram points. 

Figure~\ref{fig:bad-good_clouds}
 displays the prime cloud $\mathcal{P}_N$ of \(E_1(g_n)\) for the \(N=250{,}000\)
        consecutive Gram points \(1\leq n\leq250000\) with Good Gram points shown in blue and isolated bad Gram points are shown in red 

\begin{figure}[htbp]
   \centering
   \includegraphics[width=0.5\textwidth]{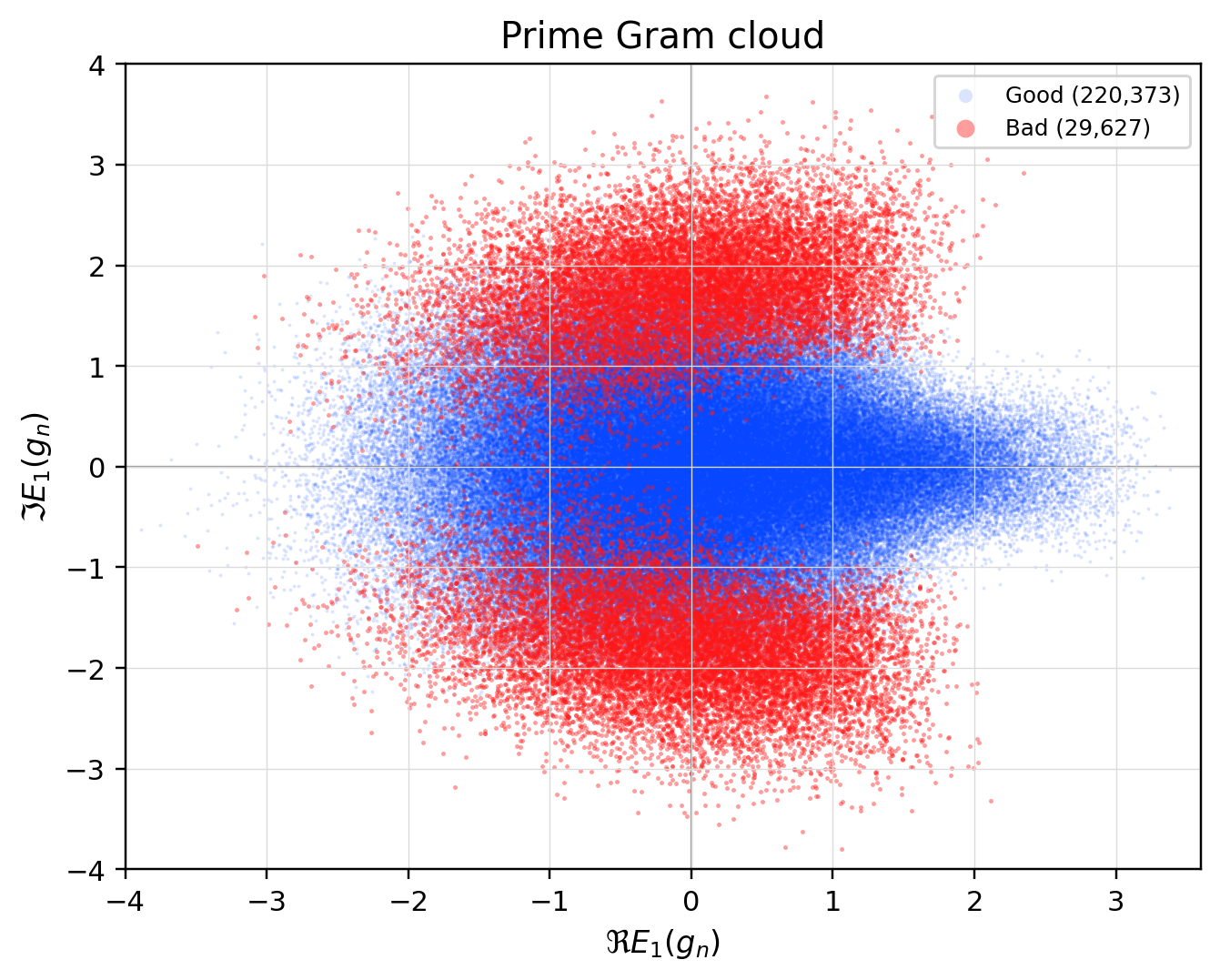}
   \caption{The prime Gram cloud
   \(\mathcal P_N=\{E_1(g_n):1\le n\le 250000\}\), with points colored
   according to Gram's law.  Good Gram points are shown in blue and bad
   Gram points in red.  The bad points separate into two pronounced lobes
   above and below the main good cloud.}
   \label{fig:bad-good_clouds}
\end{figure}

This figure is considerably more striking than the corresponding picture
for the full cloud \(\mathcal C_N\). As mentioned, in the full cloud, the separation
between good and bad Gram points is essentially forced by the definition of $E(g_n)$ and the AFE.  By
contrast, goodness is not defined in terms of the prime sum \(E_1(g_n)\).
Nevertheless, the bad points remain strongly organized after passing to
this prime projection. Instead of being randomly mixed with the good points,
they form two visible lobes around the central blue cloud.

\subsection{The Bad Prime Lobes and Viscosity}

In \cite{JerbyDiscriminant}, using the discriminant theory for deformations of
the Hardy \(Z\)-function developed there, we proposed that Gram's law and its
failures reflect an underlying local mechanism governing the motion and
collision of neighboring zeros, rather than an accidental numerical pattern. To quantify this mechanism, we introduced the local
invariant
\begin{equation}
\label{eq:visc}
        \mu(g_n)
        :=
        \left|
        \frac{Z'(g_n)}{Z(g_n)}
        \right|,
\end{equation}
which we refer to as the \emph{viscosity} of the Gram point \(g_n\). 
Initial experiments in \cite{JerbyDiscriminant} suggest that isolated bad Gram points satisfy the following repulsion property 
\begin{equation}
\mu(g_n)>4,
\end{equation} to height of up to $g_n$ of index $n \sim 2.5 \cdot 10^{7}$.  
Figure~\ref{fig:viscosity_distributions} compares the empirical distributions
of \(\ln\mu(g_n)\) for good Gram points (blue) and isolated bad Gram points (red) for the first $250,000$ Gram points.

\begin{figure}[htbp]
        \centering
        \includegraphics[width=0.6 \textwidth]{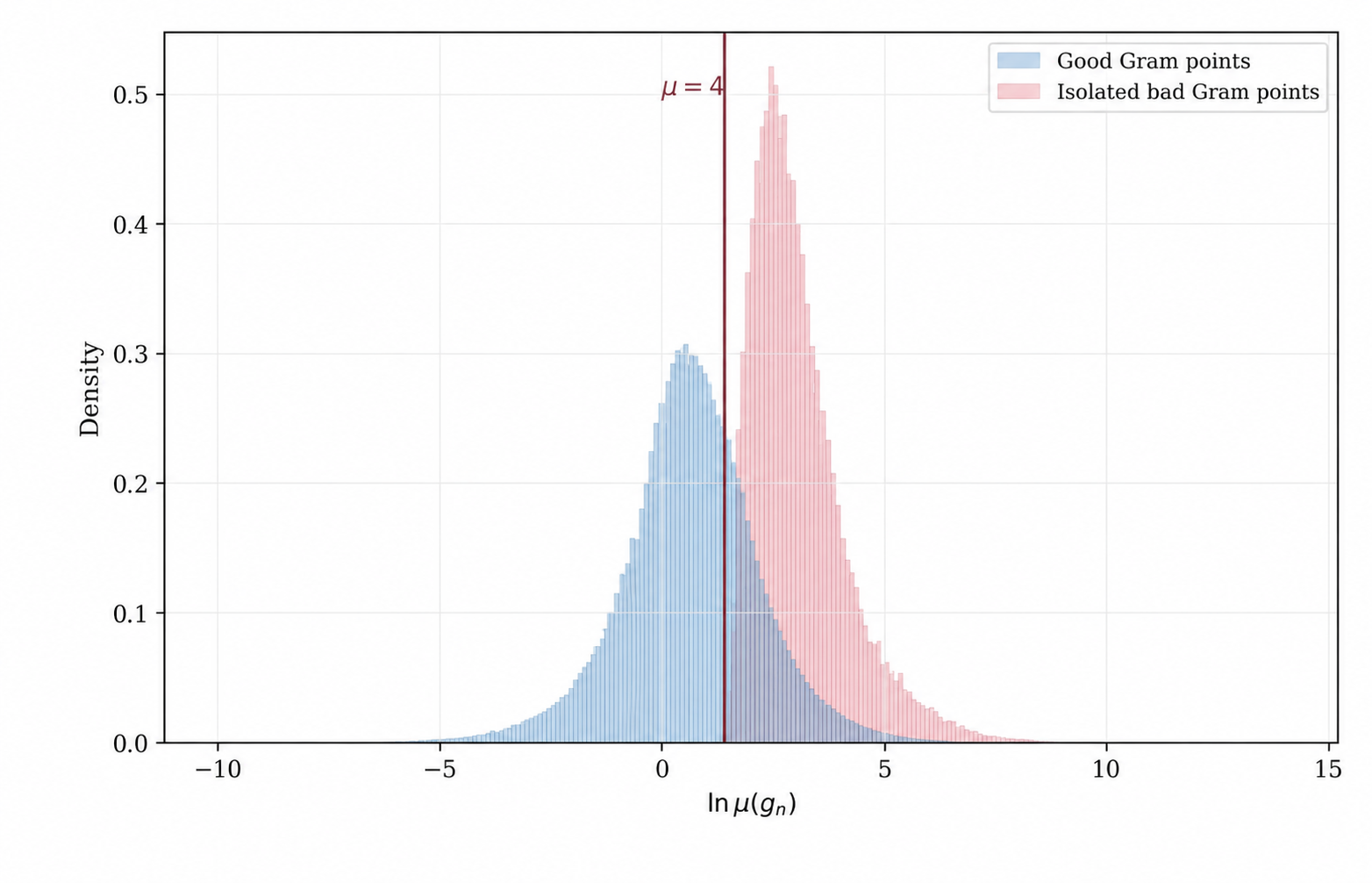}
        \caption{Empirical distributions of \(\ln\mu(g_n)\) for good Gram
        points (blue) and isolated bad Gram points (red) among the first \(250{,}000\)
        Gram points.  The vertical line marks the value \(\mu=4\).  The isolated-bad distribution exhibits
        a sharp lower boundary to the right of this line.}
        \label{fig:viscosity_distributions}
\end{figure}

The figure is based on the database of the first \(250{,}000\) Gram points,
containing \(220{,}373\) good Gram points and \(24{,}269\) isolated bad Gram
points. The distribution associated with good Gram points is
broad and approximately symmetric on the logarithmic scale, with substantial
tails extending in both directions.  By contrast, the distribution associated
with isolated bad Gram points is strongly asymmetric. It has a sharp lower edge
immediately above the repulsion bound \(\mu=4\), rises rapidly to a pronounced peak, and then
decays through a long right-hand tail. Thus the numerical evidence points not only to a lower repulsion barrier,
but to a more rigid statistical organization of isolated bad Gram points
according to the size of \(\mu(g_n)\). In the present
data set, the smallest observed value at an isolated bad Gram point is
\[
        \mu(g_n)\approx 4.26178.
\]
This suggests that the prime-indexed part of the AFE retains information
about the local Gram-point configuration.  We now investigate this phenomenon
through the viscosity
\[
        \mu(g_n)
        =
        \left|\frac{Z'(g_n)}{Z(g_n)}\right|.
\]
We shall compare the behavior of \(\mu(g_n)\) with the geometry of both the
full Gram cloud
$ \mathcal C_N$
and the prime Gram cloud
$\mathcal P_N$. For the full cloud \(\mathcal C_N\), some relation with \(\mu(g_n)\) is to
be expected.  Indeed, the AFE expresses \(Z(g_n)\), up to a small error,
through the real projection of \(E(g_n)\), while \(Z'(g_n)\) is obtained
from the same underlying phase vector with an additional logarithmic weight.
Thus the full cloud already contains much of the phase information entering
the quotient \(Z'(g_n)/Z(g_n)\).

The surprising point is that a strong geometric relation with \(\mu(g_n)\)
also appears after passing to the prime projection \(\mathcal P_N\).  In
particular, we observe that the 
position of a bad Gram point inside the bad lobes observed in \ref{fig:bad-good_clouds} is strongly correlated with the value of
\(\mu(g_n)\).  This connection between viscosity, the geometry of the prime
bad lobes, and the nearby zero configuration is one of the central themes of
the paper. 

Finally, we also compare the observed prime-cloud phenomena with the Davenport--Heilbronn setting, which provides a natural control case since it lacks the Euler-product structure of \(\zeta(s)\), violates RH, and is shown to exhibit completely different zero statistics of its full and prime Gram clouds, as well.

\bigskip

The rest of this paper is organized as follows.
Section~\ref{sec:1} presents the variational motivation for viscosity
repulsion and the Davenport--Heilbronn comparison.
Section~\ref{sec:clouds} studies the multiplicative layers of the AFE
cloud and their geometry and correlations with viscosity.
Section~\ref{sec:4} relates the bad prime lobes to nearby zero
configurations and compares these observations with the
Davenport--Heilbronn setting.
Section~\ref{sec:mechanisms} discusses the theoretical connections
through the logarithmic derivative and weighted prime sums.
We conclude with open questions and further directions in Section \ref{s:summary}.

\bigskip

\begin{remark}[Numerical database]
The complete numerical database, together with the data sets, source
code, computational scripts, animations, and supplementary material underlying
the numerical experiments in this paper, is available in the accompanying
Zenodo repository~\cite{JerbyZenodoGramPoints}.
\end{remark}

\section{Motivation for the study of the Repulsion Relation}
\label{sec:1}

 \subsection{Montgomery's conjecture and statistical repulsion of zeros}

Montgomery's conjecture  \cite{Montgomery} studies the relative
spacing between pairs of zeros at large height.  After normalizing distances by
the mean zero spacing, it predicts that the two-point density is
\[
        g(u)
        =
        1-\left(\frac{\sin \pi u}{\pi u}\right)^2 .
\]
This is the same density which governs eigenvalue spacings in the Gaussian
Unitary Ensemble.  An important qualitative implication of the conjecture is that near
the origin one has
\[
        g(u)\sim \frac{\pi^2}{3}u^2
        \qquad (u\to 0).
\]
Thus pairs of zeros with very small normalized separation are predicted to occur
with vanishing density. In other words, the limiting statistics do not allow
zeros to accumulate near one another as freely as independent random points
would.  The closer two zeros are, the more strongly their occurrence is
suppressed by the factor \(g(u)\).

In this sense Montgomery's conjecture is considered to imply a repulsive
structure among the zeros.  The term repulsion should be understood statistically as the conjecture 
does not identify a concrete dynamic force or an explicit motion between the zeros.
Rather, it means that the zero set, when viewed at large height and at the
scale of the mean spacing, is anticipated to behave like a system whose points avoid very small
gaps.  Thus Montgomery's conjecture predicts repulsion as a global statistical
property of the zeros, but not as a local dynamical
mechanism producing this avoidance.

\subsection{Statistical vs. Dynamical Repulsion of Confined Zeros}

The variation framework introduced in
\cite{JerbyAFE,JerbySections,JerbyPCC,JerbyDiscriminant}
 provides a setting in
which such a dynamical interpretation becomes possible. Consider the
parameter space
\[
    \mathcal Z_N
    =
    \left\{
    Z_N(t;\mathbf a)
    =
    Z_0(t)
    +
    \sum_{k=1}^{N}
    \frac{a_k}{\sqrt{k+1}}
    \cos\!\bigl(\theta(t)-t\log(k+1)\bigr)
    :
    \mathbf a=(a_1,\ldots,a_N)\in\mathbb R^N
    \right\},
\]
where
\[
        Z_0(t)=\cos\theta(t)
\]
is the core function. The origin \( \mathbf a=0 \) therefore corresponds to
the core, whereas \( \mathbf a=\mathbf 1=(1,\ldots,1) \) gives the arithmetic
Hardy section, which satisfies the approximation 
\[ Z(t)= Z_N(t) + O \left ( t^{-\frac{1}{4}} \right) \] in the range $2N \leq t \leq 2N+2 $. We consider a continuous path
\[
        \gamma:[0,1]\longrightarrow\mathbb R^N,
        \qquad
        \gamma(0)=0,
        \qquad
        \gamma(1)=\mathbf 1,
\]
and follow the zeros of
\(
        Z_N(t;\gamma(a))
\)
as \( a \) varies. Associated with a Gram point \( g_n \), the core function
has two neighboring zeros
\[
        t_-(0)<g_n<t_+(0),
\]
determined by
\[
        \theta\bigl(t_-(0)\bigr)
        =
        \left(n-\frac12\right)\pi,
        \qquad
        \theta\bigl(t_+(0)\bigr)
        =
        \left(n+\frac12\right)\pi.
\]
Under variation in parameter space these zeros continue locally as branches
\(
        t_-(a), t_+(a)
\). 

\begin{definition}
We will refer to the pair \( t_-(a),t_+(a) \) associated with an isolated Gram point \( g_n \) as a \emph{confined} pair of zeros if, throughout the deformation \( 0\leq a\leq1 \), these
two branches remain the only zeros of \( Z(t; \gamma(a) ) \), counted with multiplicity,
in the vertical Gram strip
\[
       g_{n-1} < Re(t) \leq g_{n+1}.
\]
\end{definition}
The problem is
then reduced to a genuine two-zero dynamical system. Starting from two
distinct real zeros, the only way for such a confined pair to leave the real
axis is through a collision
\[
        t_-(a_*)=t_+(a_*)
\]
at some parameter value \( a_* \). At this point the two zeros form a
multiple zero and may subsequently leave the real axis as a
complex-conjugate pair. This collision is detected by the Gram discriminant introduced in
\cite{JerbyDiscriminant}. Let \( g_n(\mathbf a) \) denote the extremum of
\( Z_N(t;\mathbf a) \) continuing the core Gram point \( g_n \), and define
\[
        \Delta_n(\mathbf a)
        :=
        Z_N\!\left(g_n(\mathbf a);\mathbf a\right).
\]
A collision of \( t_-(a) \) and \( t_+(a) \) occurs precisely when
\(
        \Delta_n(\gamma(a))=0.
\)
Consequently, for a confined pair the question of whether the two zeros
remain real is reduced locally to the problem of avoiding the discriminant
hypersurface
\(
        \Delta_n(\mathbf a)=0.
\)
The classical Gram law appears naturally as the first-order approximation to
this collision problem. Along the diagonal deformation
\(
        \mathbf a=r\mathbf 1 \) for \( 0\leq r\leq1
\),
one has
\[
        \Delta_n(r)
        =
        Z_N(g_n;r)+O(r^2).
\]
Thus the value \( Z(g_n) \) represents, at first order, the tendency of the
local discriminant. For a good Gram point this first-order term already has
the correct sign. For a bad Gram point it has the wrong sign and therefore
indicates a tendency toward a change of sign of the discriminant, and hence
toward a collision of the corresponding pair.

The nonlinear response provides a competing effect. The second-order
calculation of \cite{JerbyDiscriminant} gives
\[
        H_n(0)
        =
        2(-1)^n
        \left(
        \frac{Z'(g_n)}
        {\log(g_n/2\pi)}
        \right)^2 ,
\]
while \( Z'(g_n) \) also measures the infinitesimal displacement of the
continued extremum \( g_n(\mathbf a) \) in the \( t \)-direction. Thus
\( Z(g_n) \) and \( Z'(g_n) \) encode two competing local effects:
\( Z(g_n) \) measures the first-order tendency toward the collision
hypersurface, whereas \( Z'(g_n) \) measures the available lateral motion
which can move the configuration away from it.

\begin{figure}[htbp]
        \centering
        \includegraphics[width=0.75 \textwidth]{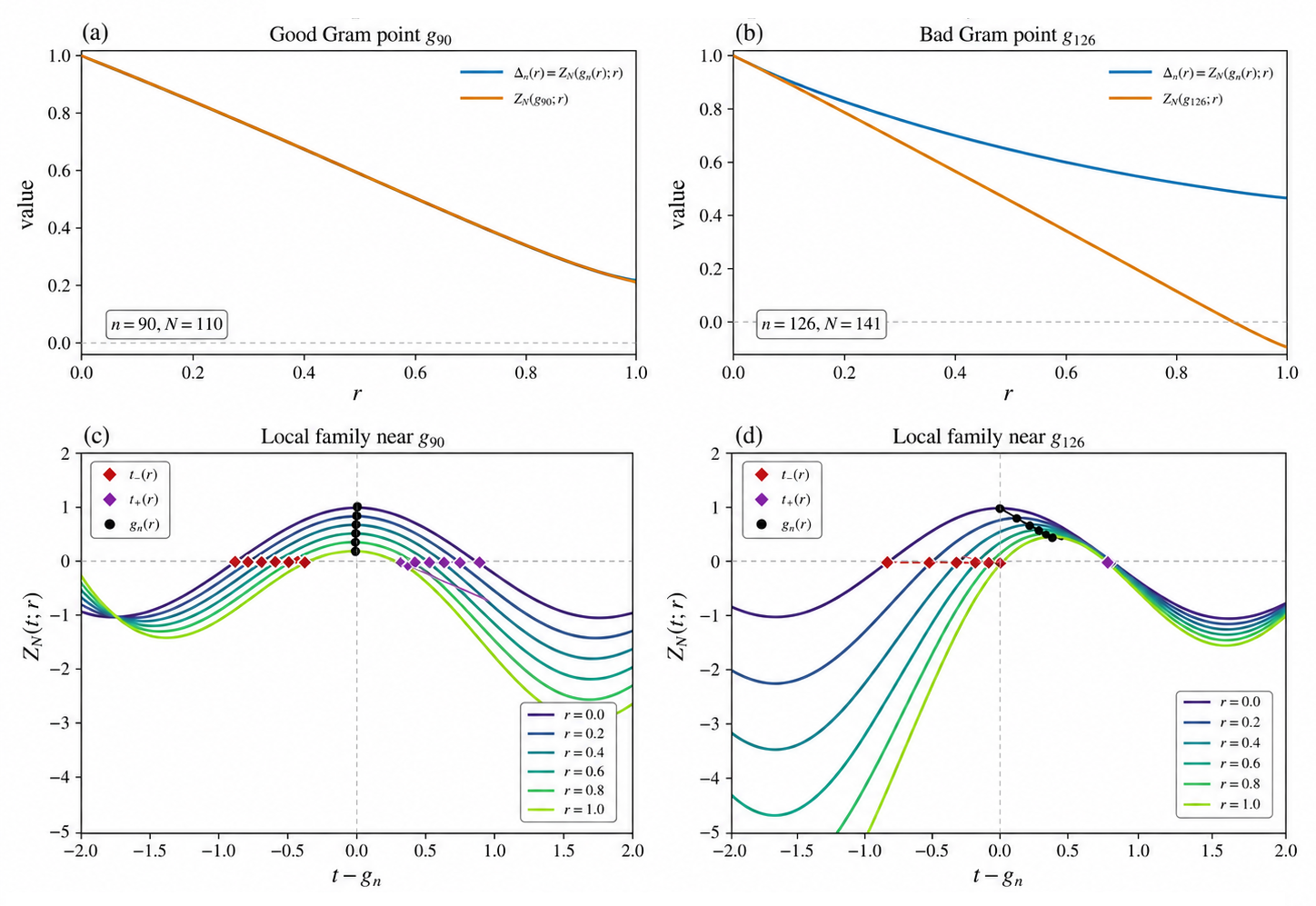}
        \caption{Local deformation at a good Gram point \(g_{90}\) and an isolated
bad Gram point \(g_{126}\). The upper panels compare the corrected
discriminant \(\Delta_n(r)\) with its fixed-point first-order approximation
\(Z_N(g_n;r)\). The lower panels show the corresponding deformed profiles
\(Z_N(t;r)\) for \(0\leq r\leq1\), together with the motion of the continued
extremum \(g_n(r)\) and the neighbouring zeros \(t_-(r)\) and \(t_+(r)\).}
        \label{fig:local_deformation_examples}
\end{figure}

Figure~\ref{fig:local_deformation_examples} illustrates this competition in two representative local configurations. In the good Gram point case, the deformation is comparatively rigid: the neighbouring zeros \(t_-(r)\) and \(t_+(r)\) remain well separated, the continued extremum \(g_n(r)\) stays essentially centred between them, and the first-order behaviour already gives a satisfactory description of the local motion. By contrast, at an isolated bad Gram point the first-order variation points much more strongly toward a possible collision of the confined pair. Nevertheless, the figure also shows the compensating nonlinear response: as the deformation develops, the extremum \(g_n(r)\) acquires a pronounced lateral displacement, and the local profile bends in a way that keeps the two zero branches apart. In this sense the bad Gram point is precisely the setting in which the tension between the collision tendency measured by \(Z(g_n)\) and the transverse escape mechanism measured by \(Z'(g_n)\) becomes visible. 
The corresponding animations, showing the dynamical version of  Fig.~\ref{fig:local_deformation_examples}, are available in the
accompanying Zenodo repository~\cite{JerbyZenodoGramPoints}. 

This motivates the definition \eqref{eq:visc} of viscosity
\(
        \mu(g_n)
\)
as a measure of this infinitesimal tradeoff. A lower bound for
\( \mu(g_n) \) may therefore be interpreted as a dynamical repulsion law:
whenever the first-order variation drives a confined pair toward collision,
the accompanying transverse motion is forced to be sufficiently strong to
resist that tendency.

In this sense, the repulsion considered here differs fundamentally from the
statistical repulsion appearing in Montgomery's conjecture. Montgomery's
pair correlation describes the scarcity of very small zero spacings in the
final zero set, whereas the viscosity describes a local dynamical mechanism
acting on a particular pair of zeros as the function is varied in
\( \mathcal Z_N \). The importance of this mechanism is also directly related to the Riemann
Hypothesis. For a confined pair, preservation of realness is equivalent to
collision avoidance. Hence a local mechanism which prevents the collision of
\( t_-(a) \) and \( t_+(a) \) is simultaneously a mechanism which prevents
these zeros from leaving the real axis. The repulsion bound may therefore
be viewed as an infinitesimal expression of a local mechanism supporting the
real-zero structure of the Hardy \( Z \)-function, see 
\cite{JerbyDiscriminant}.

\begin{remark}[Confined zeros and Rosser's law]
The confinement condition is stronger than requiring \( g_n \) to be an
isolated bad Gram point. Recall that a \emph{Gram block} of length \( N \)
is a sequence
\[
g_m,g_{m+1},\ldots,g_{m+N},
\]
whose endpoints are good and whose intermediate Gram points are bad
\cite{Edwards,RosserYoheSchoenfeld}. Thus an isolated bad Gram point
\( g_n \) is the unique interior point of a Gram block of length two,
\(
(g_{n-1},g_{n+1}]
\). Rosser's rule asserts that a Gram block of length \( N \) contains exactly
\( N \) zeros of \( Z(t) \) on the real axis. Hence a length-two block
normally contains two zeros. However, Rosser's rule is not universal, and
Trudgian proved that it fails a positive proportion of the time
\cite{Trudgian}. Thus isolated badness does not, in general, guarantee a
two-zero local configuration. It should be noted, however, that among the first \(250{,}000\) Gram points
in the data set considered, no violations of Rosser's rule occur.
Consequently, every isolated bad Gram point encountered in this range is
associated with a confined pair.

It should be noted that in our previous work~\cite{JerbyDiscriminant},
the viscosity bound
\(
        \mu(g_n)>4
\)
was investigated at substantially larger indices, including a systematic
computation of all bad Gram points up to the range
\( n\leq 2.5 \times10^{7}\).
The numerical evidence revealed a sharp distinction between isolated and
non-isolated bad Gram points. The bound was seen to persist for the isolated 
configurations, whereas bad Gram points belonging to longer Gram blocks
could have viscosity far below \(4\).  In particular, the low-viscosity
``corrupt'' points observed there were non-isolated. An extreme example is the non-isolated
\(g_{9807962}\), for which
\(\mu(g_{9807962})=0.0750883\).
Finally, we note that the data suggest more than the lower bound
\(\mu(g_n)>4\) for isolated bad Gram points associated with a confined pair
of zeros.  They also suggest the asymptotic relation
\begin{equation}
\label{eq:liminf_scaled_mu}
\liminf_{\substack{n\to\infty\\
g_n\ \mathrm{isolated\ bad,\ confined}}}
\frac{2\pi}{\log(g_n/2\pi)}\,\mu(g_n)
=
2.
\end{equation}
\end{remark}

\subsection{A non-trivial test case: The Davenport--Heilbronn function}
\label{ss:2.3}
At first sight, the restriction to confined zero pairs might appear to make
the repulsion conjecture artificially simple. Indeed, confinement isolates
exactly two zeros inside a fixed Gram strip and excludes the possibility that
additional zeros enter the local configuration. Assuming the Riemann-Hypothesis, one might therefore suspect
that the real-zero behaviour is already largely contained in the confinement
assumption itself. The Davenport--Heilbronn function provides a particularly
important counterexample to this point-of-view. Recall the Davenport--Heilbronn function
\[
        D(s)
        =
        \frac{1-i\kappa}{2}L(s,\chi_{5,2})
        +
        \frac{1+i\kappa}{2}L(s,\overline{\chi}_{5,2}),
\]
where \( \chi_{5,2} \) denotes the non-real primitive Dirichlet character modulo \(5\), chosen so that \( \chi_{5,2}(2)=i \) and
\[
        \kappa
        =
        \frac{\sqrt{10-2\sqrt5}-2}{\sqrt5-1}.
\]
Its completed form
\[
        \xi_{DH}(s)
        =
        \left(\frac{\pi}{5}\right)^{-s/2}
        \Gamma\!\left(\frac{1+s}{2}\right)D(s)
\]
satisfies the symmetric functional equation
\[
        \xi_{DH}(s)=\xi_{DH}(1-s).
\]

Thus the Davenport--Heilbronn function shares the same basic reflection
symmetry that underlies the construction of the Hardy \( Z \)-function,
yet, in sharp contrast to the Riemann zeta function, it is known to
possess zeros off the critical line. In particular, it provides a
classical example of a zeta-like function for which the analogue of the
Riemann Hypothesis fails, despite the presence of the same formal
symmetry coming from the functional equation
\cite{DavenportHeilbronnI,DavenportHeilbronnII,Titchmarsh}.

The corresponding variation framework may be constructed in direct analogy
with the space \( \mathcal Z_N \). Its core is obtained from
\[
        D_0(s)
        =
        \frac12
        \left[
        \left(\frac{\pi}{5}\right)^{-s/2}
        \Gamma\!\left(\frac{1+s}{2}\right)
        +
        \left(\frac{\pi}{5}\right)^{(s-1)/2}
        \Gamma\!\left(\frac{2-s}{2}\right)
        \right],
\]
and, after normalization on the critical line, gives a real core function
\( Z_0^{DH}(t) \). As in the \( Z \)-function case, the arithmetic terms are
then activated through a parameter space
\[
        \mathcal Z_N^{DH}
        =
        \bigl\{
        Z_N^{DH}(t;\mathbf a)
        \bigr\},
\]
with the core at \( \mathbf a=0 \) and the arithmetic Davenport--Heilbronn
section at \( \mathbf a=\mathbf 1 \). Along the diagonal deformation
\(\mathbf a=r\mathbf 1 \), for
$0\leq r\leq1$,
one obtains a family
\(
        Z_N^{DH}(t;r)
\)
and a continued extremum \( g_n^{DH}(r) \), exactly as for the Hardy
\( Z \)-function.

The first pair of zeros responsible for the failure of the
Riemann-hypothesis analogue occurs near the Gram point \( g_{44}^{DH} \).
At the core, the corresponding zeros
\[
        t_-^{DH}(0)
        <
        g_{44}^{DH}
        <
        t_+^{DH}(0)
\]
are distinct and real. Moreover, throughout the relevant deformation they
form a confined local pair. The phenomenon is therefore not caused by zeros
entering from outside the Gram strip. What happens instead is precisely the
collision mechanism which confinement was designed to isolate.

To see this, define the Davenport--Heilbronn Gram discriminant by
\[
        \Delta_{44}^{DH}(r)
        =
        Z_N^{DH}
        \bigl(
        g_{44}^{DH}(r);r
        \bigr).
\]
In striking contrast with the isolated bad Gram point \( g_{126} \) of the
Hardy \( Z \)-function, the first-order approximation
\(
       Z_N^{DH}(g_{44}^{DH};r)
\)
tracks the corrected discriminant very closely. There is essentially no
large nonlinear correction which restores the correct sign. Instead,
\(\Delta_{44}^{DH}(r)\) itself reaches zero. In the numerical deformation
shown below this occurs at
\[
        r_*
        \approx
        0.72346.
\]
At this parameter value,
\[
        \Delta_{44}^{DH}(r_*)=0,
\]
and equivalently
\[
        t_-^{DH}(r_*)
        =
        t_+^{DH}(r_*)
        =
        g_{44}^{DH}(r_*).
\]
The two real zero branches therefore collide at a multiple zero. Beyond the
collision they cease to form two distinct real zeros and continue as the
non-real pair responsible for the failure of the Davenport--Heilbronn
analogue of the Riemann Hypothesis.

Figure~\ref{fig:DH_collision} makes the contrast with the Hardy
\( Z \)-function particularly transparent. For the isolated bad Gram point
\( g_{126} \), the incorrect first-order motion is compensated by a pronounced
lateral displacement of the extremum, and the corrected discriminant remains
of the proper sign. In the Davenport--Heilbronn example, by contrast,
\( g_{44}^{DH}(r) \) moves only weakly. The transverse response is therefore
too small to compensate for the motion toward the discriminant, and the two
zero branches meet.

\newpage

\begin{figure}[htbp]
        \centering
        \includegraphics[width=0.64\textwidth]{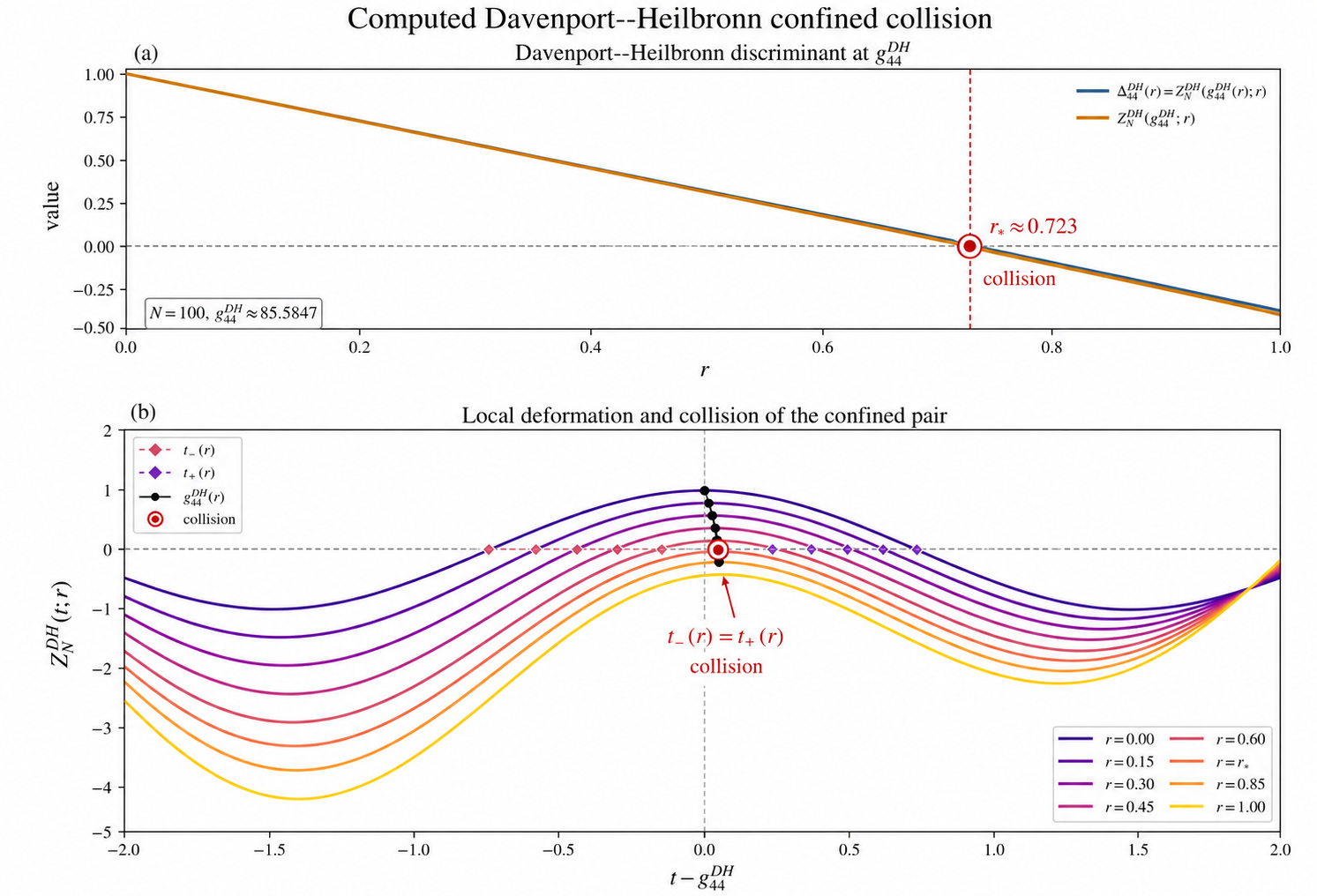}
        \caption{Local deformation of the confined zero pair associated
        with \( g_{44}^{DH} \) for the Davenport--Heilbronn function.
        The upper panel compares the corrected discriminant
        \( \Delta_{44}^{DH}(r) \) with the fixed-point first-order
        approximation \( Z_N^{DH}(g_{44}^{DH};r) \). The two remain closely
        aligned and cross the zero level at the collision parameter
        \( r_*\approx0.72346 \). The lower panel shows the corresponding
        family \( Z_N^{DH}(t;r) \), together with the continued extremum
        \( g_{44}^{DH}(r) \) and the real zero branches
        \( t_-^{DH}(r) \) and \( t_+^{DH}(r) \) up to their collision.
        Unlike the Hardy \( Z \)-function example, the extremum undergoes
        only a small lateral displacement and the confined pair actually
        collides.}
        \label{fig:DH_collision}
\end{figure} 

The corresponding animations, showing the dynamical version of  Fig. ~\ref{fig:DH_collision}, are available in the
accompanying Zenodo repository~\cite{JerbyZenodoGramPoints}.  They show the
motion of the confined zero pair and the continued extremum throughout the
deformation, including the collision and the subsequent departure of the two
zeros into the complex plane.

It should be noted that the analogous viscosity in this
Davenport--Heilbronn example is small: for the section used in
Figure~\ref{fig:DH_collision} one obtains numerically
\[
        \mu_{DH}(g_{44}^{DH})
        \approx 1.45,
\]
far below the conjectural threshold \(4\). Correspondingly, the transverse
motion is insufficient and an actual collision occurs. The Davenport--Heilbronn example therefore gives a concrete meaning to the
repulsion conjecture. The proposed inequality is not merely a quantitative
property of already real zeros; it distinguishes a collision-avoiding
dynamics from a dynamics in which a confined pair is allowed to leave the
real axis. In this sense, the repulsion relation is naturally interpreted as
a local mechanism protecting the real-zero configuration of the Hardy
\( Z \)-function, and hence as a phenomenon intimately connected with the
Riemann Hypothesis.

\section{The Full Gram Cloud $\mathcal{C}_N$ and its Multiplicative Layers}
\label{sec:clouds}

\subsection{Multiplicative-depth layers of the \(E(g_n)\)-cloud}
\label{subsec:afe_layers}

The complex cloud $\mathcal{C}_N$ given by values of \(E(g_n)\), 
contains contributions from all integers \(k\le N(g_n)\).  To examine how
the arithmetic structure of the summation index is reflected in the geometry
of the cloud, we decompose \(E(g_n)\) according to the total number of prime
factors of \(k\). Let
\[
        \Omega(k)
        :=
        \sum_{p^\alpha\Vert k}\alpha
\]
denote the number of prime factors of \(k\), counted with multiplicity.  For
\(j\ge 1\), define the \(j\)-th multiplicative-depth layer by
\begin{equation}
\label{eq:Ej_layer}
        E_j(g_n)
        :=
        \sum_{\substack{k\le N(g_n)\\ \Omega(k)=j}}
        \frac{e^{ig_n\log k}}{\sqrt{k}} .
\end{equation}
Thus \(E_j(g_n)\) is the part of the AFE cloud coming from integers with
exactly \(j\) prime factors.  In particular, the first layer is the
prime-indexed component
\begin{equation}
\label{eq:prime_layer}
        E_1(g_n)
        =
        \sum_{\substack{p\le N(g_n)\\ p\ {\rm prime}}}
        \frac{e^{ig_n\log p}}{\sqrt p}.
\end{equation}

\begin{proposition}[Multiplicative layer decomposition]
\label{prop:layer_decomposition}
For every Gram point \(g_n\), the AFE sum admits the finite decomposition
\begin{equation}
\label{eq:E_layer_decomposition}
        E(g_n)
        =
        1+\sum_{j\ge 1}E_j(g_n).
\end{equation}
Equivalently, the full cloud \(E(g_n)\) is obtained by adding the
multiplicative-depth layers, together with the contribution of the term
\(k=1\).
\end{proposition}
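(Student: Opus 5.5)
The plan is to partition the index set of the finite sum defining \(E(g_n)\) according to the value of \(\Omega(k)\), and then to regroup the terms, which is legitimate because the sum is finite. Write \(N=N(g_n)=\lfloor\sqrt{g_n/2\pi}\rfloor\). By definition,
\[
E(g_n)=\sum_{1\le k\le N}\frac{e^{ig_n\log k}}{\sqrt k}.
\]

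The first step is to note that \(\Omega(k)=0\) if and only if \(k=1\), since the empty product is the only integer with no prime factors. The term \(k=1\) therefore contributes \(e^{ig_n\log 1}/\sqrt1=1\). Every \(k\ge2\) has a unique value \(\Omega(k)=j\ge1\), by the fundamental theorem of arithmetic. This gives the disjoint decomposition
\[
\{1\le k\le N\}=\{1\}\sqcup\bigsqcup_{j\ge1}\{k\le N:\Omega(k)=j\}.
\]

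The second step checks that only finitely many of these pieces are nonempty. Since \(k\ge 2^{\Omega(k)}\), a nonempty layer requires \(j\le \log_2 N\). Thus \(E_j(g_n)=0\) for \(j>\lfloor\log_2 N\rfloor\), and the sum \(\sum_{j\ge1}E_j(g_n)\) has only finitely many nonzero terms. In particular, no convergence question arises.

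The final step is to sum over each block of the partition, which yields \(E(g_n)=1+\sum_{j\ge1}E_j(g_n)\) with \(E_j\) exactly as in \eqref{eq:Ej_layer}. The case \(j=1\) recovers \eqref{eq:prime_layer}, since \(\Omega(k)=1\) means exactly that \(k\) is prime.

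There is no genuine obstacle here. The only points requiring care are the two small facts used above: that \(k=1\) is the unique integer with \(\Omega=0\), which is why it is kept as the separate constant term, and that finiteness follows from the bound \(j\le\log_2N(g_n)\). Note also that nothing specific to Gram points is used, so the identity holds for every real \(t\).
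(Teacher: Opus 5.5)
Your proof is correct and is essentially the paper's: you split off \(k=1\) and partition \(2\le k\le N(g_n)\) by \(\Omega(k)\) using unique factorization. Your added bound \(j\le\log_2 N(g_n)\) usefully justifies the word ``finite'', which the paper leaves implicit. One small caveat concerns your closing remark that the identity holds for every real \(t\): it needs \(N(t)\ge 1\), i.e.\ \(t\ge 2\pi\). For smaller \(t\), \(E(t)\) is an empty sum equal to \(0\), not \(1\).
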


\begin{proof}
By definition,
\[
        E(g_n)
        =
        \sum_{k\le N(g_n)}
        \frac{e^{ig_n\log k}}{\sqrt{k}} .
\]
The term \(k=1\) contributes \(1\).  Every integer \(k\ge 2\) has a unique
prime factorization, and hence a well-defined value of \(\Omega(k)\).  The
sets
\[
        \{k\le N(g_n):\Omega(k)=j\},
        \qquad j\ge 1,
\]
therefore form a disjoint partition of the integers \(2\le k\le N(g_n)\).
Splitting the sum according to this partition gives
\[
        E(g_n)
        =
        1+
        \sum_{j\ge 1}
        \sum_{\substack{k\le N(g_n)\\ \Omega(k)=j}}
        \frac{e^{ig_n\log k}}{\sqrt{k}}
        =
        1+\sum_{j\ge1}E_j(g_n),
\]
as claimed.
\end{proof}

This decomposition gives a natural arithmetic filtration of the AFE phase
cloud.  The layer \(E_1(g_n)\) contains the prime frequencies, \(E_2(g_n)\)
contains the semiprime frequencies, and higher layers correspond to
increasing multiplicative complexity.  We now ask which of these layers
retain information about the local zero geometry, as measured by the
viscosity
\(
        \mu(g_n)
        =
        \left|\frac{Z'(g_n)}{Z(g_n)}\right|
\). To quantify this relation, we use Spearman's rank correlation coefficient
\[
        \rho_S(X,Y)
        =
        \operatorname{Corr}
        \bigl(\operatorname{rank}X,\operatorname{rank}Y\bigr),
\]
which measures monotone association between two quantities
\cite{Spearman1904}.  For each layer \(E_j(g_n)\), \(j=1,\ldots,5\), we
first fold the cloud by
\(        (\Re E_j(g_n),\Im E_j(g_n))
        \longmapsto
        (\Re E_j(g_n),|\Im E_j(g_n)|)
\).
Let \(T_{j,n}\) denote the coordinate of the folded point along the first
principal axis of the corresponding folded \(E_j\)-cloud. Figure
\ref{fig:spearman_layers} shows the absolute Spearman rank correlation
between \(T_{j,n}\) and \(\mu(g_n)\).

\begin{figure}[H]
\centering
\includegraphics[width=0.45\textwidth]{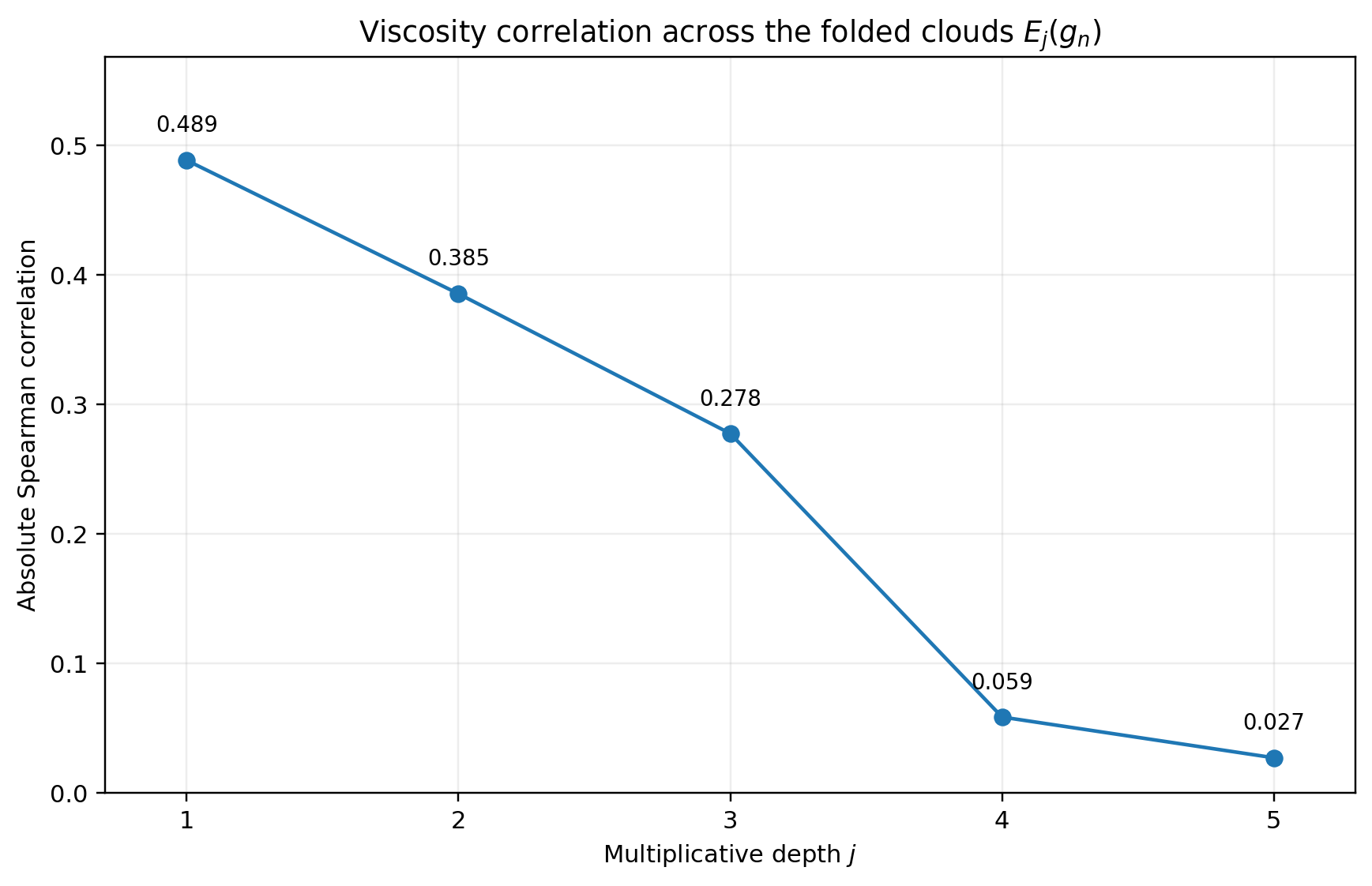}
\caption{
Absolute Spearman rank correlation between the viscosity
\(\mu(g_n)\) and the principal-axis coordinate \(T_{j,n}\) of the folded
\(E_j\)-cloud, for \(j=1,\ldots,5\).
}
\label{fig:spearman_layers}
\end{figure}

The figure shows a clear monotone decrease of the correlation as the
multiplicative depth \(j\) increases.  Thus, among the first few arithmetic
layers, the prime layer \(E_1(g_n)\) carries the strongest visible relation
to the viscosity. It should be kept in mind, however, that in the present data set
\(N(g_n)\le 164\).  Consequently, the higher-\(\Omega\) layers contain only
a limited number of frequencies:
\[
        \#\{k\le 164:\Omega(k)=j\}
        =
        38,54,37,21,8,
        \qquad j=1,\ldots,5.
\]
For this reason the Spearman coefficient alone should not be regarded as a
complete description of the arithmetic structure.  It is a useful first
summary, but the two-dimensional geometry of the clouds contains additional
information.  We therefore return to the prime cloud $\mathcal{P}_N$ of values of \(E_1(g_n)\), where
the separation between good and isolated bad Gram points is most visible,
and then compare it with the higher multiplicative-depth layers. Figure~\ref{fig:prime_cloud} re-displays the prime $\mathcal{P}_N$
for the first \(N=250{,}000\) Gram points. This time with the \(40\) isolated bad Gram points with
the smallest values of \(\mu(g_n)\) are marked by yellow circles with black
boundaries.

\begin{figure}[H]
        \centering
        \includegraphics[width=0.45 \textwidth]{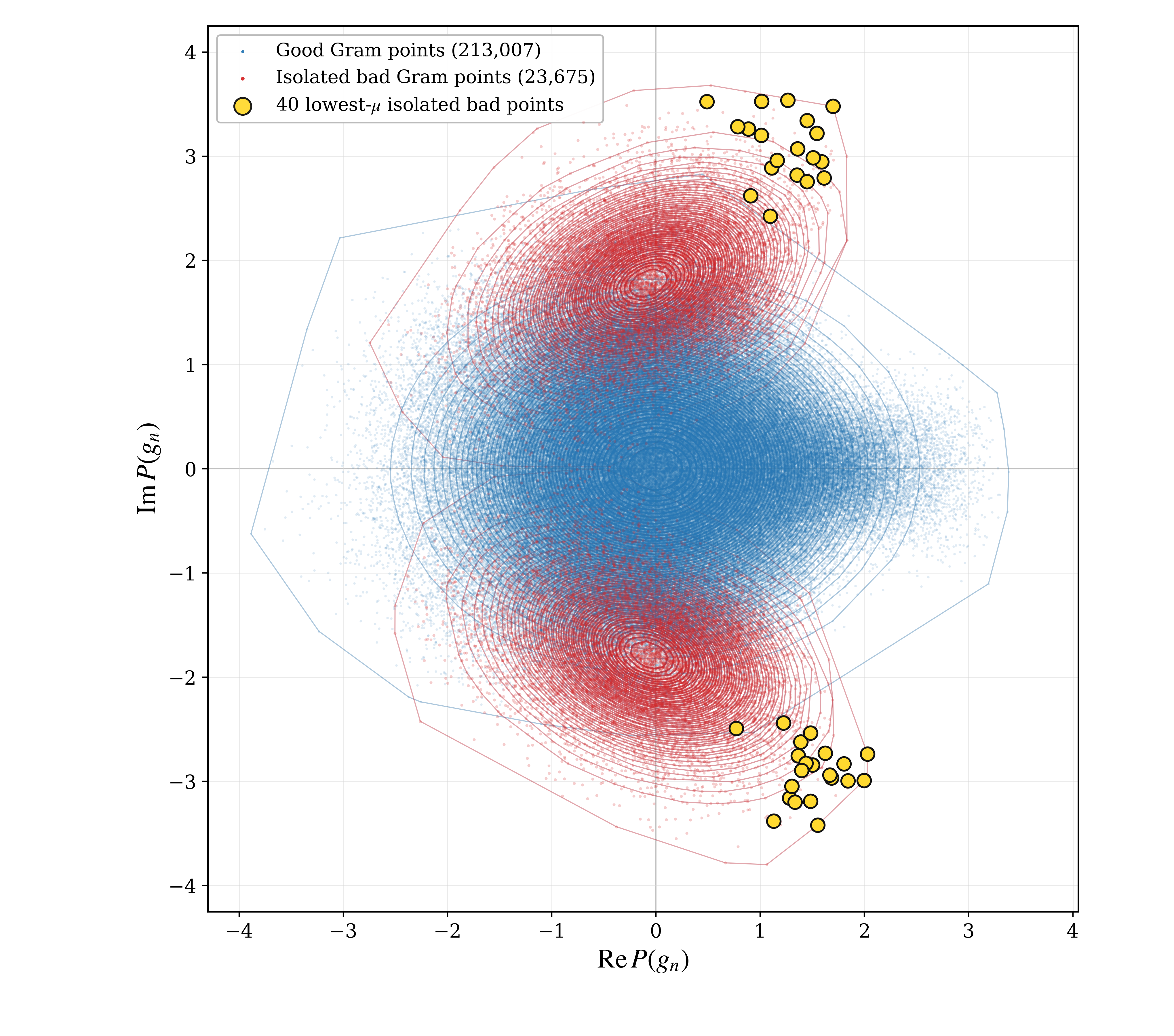}
        \caption{
The prime layer \(E_1(g_n)\) for the first \(250{,}000\) Gram points.
Good Gram points are shown in blue and isolated bad Gram points
in red. The \(40\) isolated bad Gram points of
smallest \(\mu(g_n)\) are marked by yellow circles.
The nested onion curves are obtained by successive convex-hull peeling.}
        \label{fig:prime_cloud}
\end{figure}

To make the geometry of the two point clouds more visible, we superimpose
their successive convex hull peeling layers to which we will refer as \emph{onion layers}. For the good points, these are obtained by taking
the convex hull of the blue cloud, removing the points on its boundary, and
then repeating the procedure on the remaining points. For the isolated bad
points, the same convex-hull peeling is carried out separately on the upper
and lower red components. 
The two classes display markedly different geometries. The good Gram
points form a broad cloud concentrated near the real direction,
whereas the isolated bad points occupy two pronounced lobes above
and below the real axis, and the
        lowest-viscosity configurations are concentrated near
        distinguished extremal regions of these lobes. 

In fact, the viscosity repulsion phenomenon is already visible in the prime cloud. To compare the two bad lobes of the prime cloud $\mathcal{P}_N$
on the same footing, we fold the cloud by
\[
        (x,y)\longmapsto (x,|y|).
\]
After this folding, the upper and lower bad lobes are superimposed into a
single folded prime cloud. The result is shown in Figure~\ref{fig:folded_prime_viscosity}.  The points
are colored according to \(\ln(\mu(g_n)-4)\), so that points close to the
conjectural repulsion boundary \(\mu=4\) are distinguished from points with
larger viscosity.

\begin{figure}[htbp]
        \centering
        \includegraphics[width=0.5\textwidth]{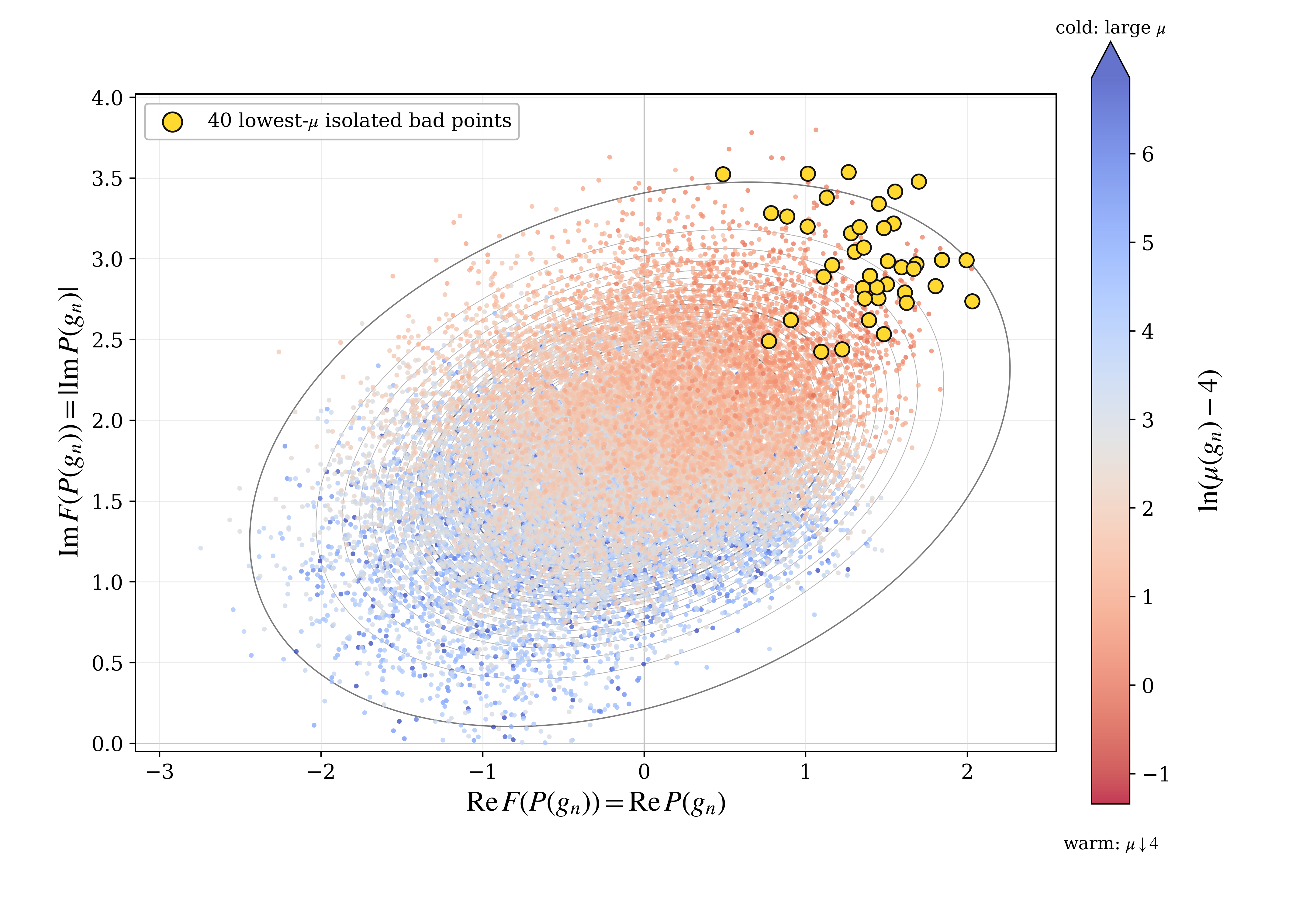}
        \caption{
        Folded prime cloud for the isolated bad Gram points, colored
        according to \(\ln(\mu(g_n)-4)\).  The \(40\) isolated bad Gram
        points with the smallest values of \(\mu(g_n)\) are highlighted in
        yellow.
        }
        \label{fig:folded_prime_viscosity}
\end{figure}

The main point is that the viscosity is not randomly distributed across the
folded cloud.  Rather, the folded bad cloud has a clear principal direction,
and \(\mu(g_n)\) varies systematically along this direction.  The smallest
viscosities, namely the configurations closest to the repulsion boundary
\(\mu=4\), are concentrated near one extremal end of the folded lobe.  As one
moves along the major axis of the cloud, the viscosity increases. Thus the prime cloud contains more information than the good--bad
distinction alone.  It does not merely detect that a Gram point is isolated
bad, it also organizes the isolated bad configurations according to the
strength of the local repulsion measured by
\(\mu(g_n)\).
This gives a geometric explanation of the Spearman correlation observed
above as the principal-axis coordinate of the folded prime cloud is seen to function as
a visible ordering parameter for the viscosity.

\subsubsection{The higher multiplicative layers $E_j(g_n)$ for $j \geq 2$}

We next compare the first five multiplicative-depth layers directly.
Figure~\ref{fig:Ej_montage} displays the clouds \(E_j(g_n)\),
\(1\leq j\leq5\), with good Gram points shown in blue and isolated bad
Gram points in red.

\begin{figure}[htbp]
        \centering
        \includegraphics[width=0.9 \textwidth]
        {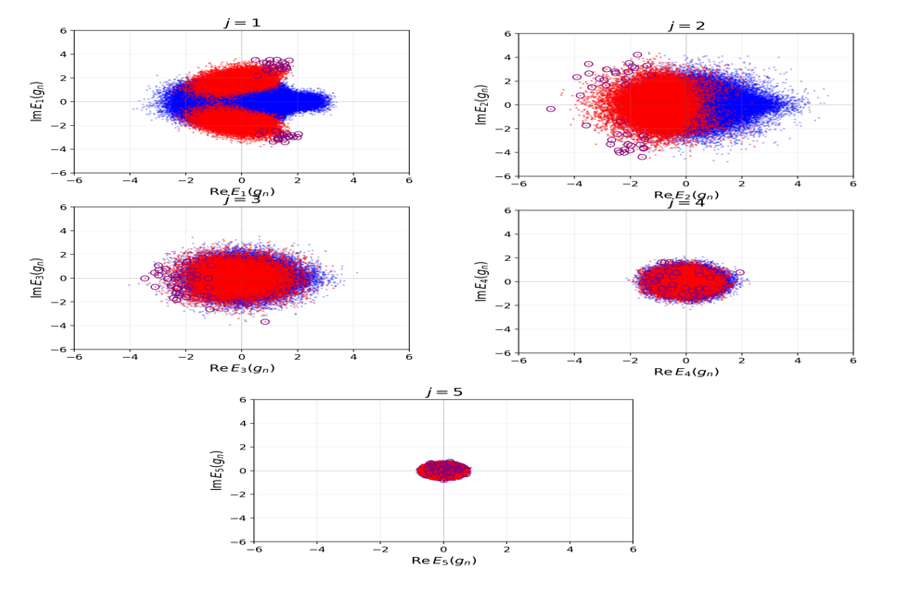}
        \caption{
        The first five multiplicative-depth layers
        \(E_j(g_n)\), \(1\leq j\leq5\), shown on a common scale.
        Good Gram points are shown in blue and isolated bad Gram
        points in red.
        }
        \label{fig:Ej_montage}
\end{figure}

The visual progression in Figure~\ref{fig:Ej_montage} agrees closely
with the correlation hierarchy observed in \ref{subsec:afe_layers}. For \(j=1\), the distinction
between good and isolated bad Gram points is pronounced. The
separation remains clearly visible in the second layer
\(\Omega(k)=2\). In the third layer the two populations remain
distinguishable, but the organization is considerably weaker. By
\(j=4\) and \(j=5\), the red and blue clouds are largely intermixed,
and the viscosity-dependent directional organization found in the
first layers has essentially disappeared.

\subsection{A probabilistic interpretation of the multiplicative layers}
\label{subsec:theory_layers}

The geometry observed in the first multiplicative layers admits a
natural interpretation in terms of random phases and large-deviation
theory. For a comprehensive treatment of large-deviation theory, we refer the reader
to~\cite{DemboZeitouni}. The prime layer is the most transparent case.

\subsubsection{The prime layer}

Heuristically, as the Gram point \(g_n\) varies, the prime phases
\[
        \Theta_p(g_n):=g_n\log p \pmod{2\pi}
\]
behave like weakly dependent angular variables.  This is the Gram-point
analogue of the standard random-phase heuristic for zeta Dirichlet
polynomials: when \(t\) varies over a long interval, the phases
\(t\log p\) for distinct primes are treated as approximately independent
rotations, reflecting the rational independence of the numbers \(\log p\)
and underlying the random Euler product model for \(\zeta(s)\);
see \cite{IvicZ,MontgomeryVaughanMNT,Titchmarsh}. The prime layer
\(
        E_1(g_n)
        =
        \sum_{p\leq N(g_n)}
        \frac{e^{ig_n\log p}}{\sqrt p}
\)
may therefore be viewed as a planar vector sum
\[
        P_N
        =
        \sum_{p\leq N}
        \frac{1}{\sqrt p}
        \bigl(
        \cos\Theta_p,\,
        \sin\Theta_p
        \bigr).
\]
Each prime contributes a vector of length \(p^{-1/2}\), pointing in the
direction determined by the phase \(\Theta_p\). The endpoint \(P_N\) is thus
the result of adding many small vectors with different directions. This is
the sense in which the prime cloud resembles a weighted planar random walk.

\begin{remark}[Random phases, primes, zeros, and bad Gram points] It should be noted that in the classical random-phase heuristic model, the
prime sum
\[
        \sum_{p\le X}\frac{e^{it\log p}}{\sqrt p}
\]
is typically considered to behave like a weighted planar random walk. In this work \(t\) is sampled only at the Gram points \(g_n\), which already shifts from the classical heuristic setting. Rather, it gives a natural ``null model'' for the prime cloud
\(\mathcal{P}_N\). 

From this point of view, the striking feature of the isolated bad Gram
points is that they do not behave like generic samples from this random
phase model. This suggests that the condition of being an isolated bad Gram point imposes
a nontrivial constraint on the prime phases. The reason this is plausible is the prime--zero duality expressed by the
explicit formula. In particular, prime sums and zero configurations are not independent
objects, but two sides of the same oscillatory structure.  Thus a special
local zero configuration near \(g_n\), such as the confined pair associated
with an isolated bad Gram point, may leave a visible signature in the prime
projection \(E_1(g_n)\).  In this interpretation, the prime lobes are not a
generic random-walk feature. Rather, they are the geometric trace of conditioning the
prime phases on a special local zero configuration.
\end{remark}

We now fix a class \(\mathcal C\) of Gram points. For example,
\(\mathcal C\) may denote all good Gram points, the upper isolated-bad lobe,
or the lower isolated-bad lobe. We then regard \(P_N\) as varying only over
the Gram points in this class. Its conditional mean is
\[
        m_{\mathcal C,N}
        :=
        \mathbb E_{\mathcal C} P_N,
\]
that is, the average position of the corresponding cloud, and its covariance
matrix is
\[
        \Sigma_{\mathcal C,N}
        :=
        \operatorname{Cov}_{\mathcal C}(P_N).
\]
The covariance matrix records the shape of the cloud near its center. If the
cloud spreads equally in every direction, then
\(\Sigma_{\mathcal C,N}\) is approximately a scalar multiple of the
identity matrix and the cloud is locally circular. If the cloud spreads
more strongly in one direction than another, then
\(\Sigma_{\mathcal C,N}\) has unequal eigenvalues and the cloud is locally
elliptical.

To describe not only the central bulk but also the rarer points farther away
from the center, it is convenient to introduce the logarithmic
moment-generating function
\[
        K_{\mathcal C,N}(u)
        :=
        \log
        \mathbb E_{\mathcal C}
        \exp\!\left(
        \langle u,P_N\rangle
        \right),
        \qquad
        u\in\mathbb R^2.
\]
Here
\[
        \langle u,P_N\rangle
        =
        u_1\operatorname{Re}P_N
        +
        u_2\operatorname{Im}P_N
\]
is the component of the random vector \(P_N\) in the direction \(u\).
The exponential factor
\(
        \exp\!\left(
        \langle u,P_N\rangle
        \right)
\)
gives extra weight to points lying far in the direction \(u\). Thus
\(K_{\mathcal C,N}(u)\) measures, in a compressed form, how strongly the
cloud extends in each possible direction.

Under the random-phase approximation, the contribution of each prime is
treated as approximately independent of the others. Since the exponential
of a sum factors into a product,
\[
        \exp\!\left(
        \left\langle
        u,
        \sum_{p\leq N}X_p
        \right\rangle
        \right)
        =
        \prod_{p\leq N}
        \exp\!\left(
        \langle u,X_p\rangle
        \right),
\]
where
\[
        X_p
        :=
        \frac{1}{\sqrt p}
        \bigl(
        \cos\Theta_p,\,
        \sin\Theta_p
        \bigr),
\]
independence would imply that the expectation of the product is the product
of the expectations. Taking logarithms then converts the product into a sum.
Thus one is led heuristically to
\[
        K_{\mathcal C,N}(u)
        \approx
        \sum_{p\leq N}
        \log
        \mathbb E_{\mathcal C}
        \exp\!\left(
        \frac{
        u_1\cos\Theta_p
        +
        u_2\sin\Theta_p
        }{\sqrt p}
        \right).
\]
This approximate additivity is one of the principal reasons why the prime
layer is especially tractable: the complicated geometry of the entire cloud
is expected to be built from a sum of individual prime contributions.

The next step is to associate to \(K_{\mathcal C,N}\) a function measuring
how unlikely it is for the cloud to reach a given point \(z\in\mathbb R^2\).
This is the role of the Legendre transform
\[
        I_{\mathcal C,N}(z)
        :=
        \sup_{u\in\mathbb R^2}
        \left\{
        \langle u,z\rangle
        -
        K_{\mathcal C,N}(u)
        \right\}.
\]
The function \(I_{\mathcal C,N}\) is called a rate function in
large-deviation theory. Informally, large-deviation theory says that points
with large values of \(I_{\mathcal C,N}(z)\) are statistically much less
likely to occur than points with small values. Thus \(I_{\mathcal C,N}\)
plays a role analogous to an energy landscape for the cloud: low values
describe the region where points are concentrated, while higher values
describe progressively rarer excursions away from the center.
This point of view is useful here because contours of equal value
\[
        I_{\mathcal C,N}(z)=\text{constant}
\]
should, heuristically, correspond to contours of comparable statistical
rarity. In other words, if two points lie on approximately the same level
set of \(I_{\mathcal C,N}\), one expects them to occur with roughly comparable
frequency.

The local shape of these contours can be understood using basic ideas from
large-deviation theory, see
~\cite{DemboZeitouni}. Near the mean
\(m_{\mathcal C,N}\), the logarithmic moment-generating function admits the
Taylor expansion
\[
        K_{\mathcal C,N}(u)
        =
        \langle u,m_{\mathcal C,N}\rangle
        +
        \frac12
        u^T\Sigma_{\mathcal C,N}u
        +
        O(\|u\|^3).
\]
The linear term contains the mean of the cloud, while the quadratic term
contains its covariance matrix. Keeping only these first two non-trivial
terms gives the Gaussian approximation to the cloud near its center.

For a quadratic function, the Legendre transform can be computed explicitly.
One obtains
\[
        I_{\mathcal C,N}(z)
        =
        \frac12
        (z-m_{\mathcal C,N})^T
        \Sigma_{\mathcal C,N}^{-1}
        (z-m_{\mathcal C,N})
        +
        O\!\left(
        \|z-m_{\mathcal C,N}\|^3
        \right).
\]
Thus, sufficiently near the center, the contours
\[
        I_{\mathcal C,N}(z)=c
\]
are approximately given by
\[
        (z-m_{\mathcal C,N})^T
        \Sigma_{\mathcal C,N}^{-1}
        (z-m_{\mathcal C,N})
        =
        2c.
\]
These are ellipses. Their principal directions are precisely the eigenvectors
of the covariance matrix
\(\Sigma_{\mathcal C,N}\), while the lengths of their principal semiaxes are
determined by the square roots of its eigenvalues. Thus the familiar
elliptical geometry seen in the prime cloud is exactly what one expects from
the second-order, or Gaussian, approximation to its local statistics.

This viewpoint also gives a heuristic interpretation of the empirical onion
layers obtained by repeated convex-hull peeling. A basic property of a
Legendre-transform rate function is convexity. Consequently, its sublevel
sets
\[
        \left\{
        z:
        I_{\mathcal C,N}(z)\leq r
        \right\}
\]
are nested convex sets. As \(r\) increases, these sets expand outward from
the most probable part of the cloud toward increasingly rare regions.

The empirical convex-hull peeling procedure of Fig. \ref{fig:prime_cloud} produces a similar nested
geometry directly from the finite data set. The outer convex hull is removed,
then the convex hull of the remaining points, and so on. We therefore regard
the onion layers as a finite-sample geometric analogue of the nested sublevel
sets of the rate function. Near the center, where the quadratic approximation
dominates, these layers should be approximately elliptical. Farther away,
systematic deviations from exact ellipses may reflect the cubic and higher
terms omitted from the local expansion of \(K_{\mathcal C,N}\), and hence
higher-order structure in the prime-phase distribution.

\begin{example}[Artificial random isotropic model] It is instructive to compare this with the simplest random-phase model.
Suppose that, for every prime \(p\), the phase \(\Theta_p\) were genuinely
uniform on \([0,2\pi)\), independently of all other prime phases. Then
\[
        \mathbb E(\cos\Theta_p)
        =
        \mathbb E(\sin\Theta_p)
        =
        0,
\]
and
\[
        \mathbb E(\cos^2\Theta_p)
        =
        \mathbb E(\sin^2\Theta_p)
        =
        \frac12,
        \qquad
        \mathbb E(\cos\Theta_p\sin\Theta_p)=0.
\]
Hence the covariance contributed by the \(p\)-th prime would be
\[
        \frac{1}{2p}I_2.
\]
Adding the prime contributions gives
\[
        \Sigma_N
        \approx
        \frac12
        \left(
        \sum_{p\leq N}\frac1p
        \right)
        I_2.
\]
The covariance would therefore be rotationally symmetric, and the local
contours would be circles.

\end{example} 

The prime-layer geometry, observed below, departs in a clear way from the simplest
isotropic random-phase picture. For good Gram points, the cloud is observed to have a
pronounced nearly circular central bulk, but with a visible deformation in
the positive-real direction. The isolated bad Gram points display a very
different geometry, separating into two distinct upper and lower lobes with
approximately elliptical shapes and preferred orientations. Thus the two
Gram-point classes are not obtained from one another by a simple change of
scale or by a uniform deformation of a common cloud. Rather, the conditioning
on the Gram-point type produces qualitatively different geometric structures
in the prime layer. 
\paragraph{Good Gram points: an isotropic core with a real-direction bias.}

Figure~\ref{fig:good_prime_cloud} displays the prime-layer geometry for the
good Gram points. The successive convex layers reveal a nearly circular
central bulk, together with a persistent bump in the positive-real
direction. Thus the dominant geometry is close to isotropic near the center,
while the outer layers exhibit a clear one-sided asymmetry.

\medskip
\begin{figure}[htbp]
        \centering
        \includegraphics[width=0.62\textwidth]{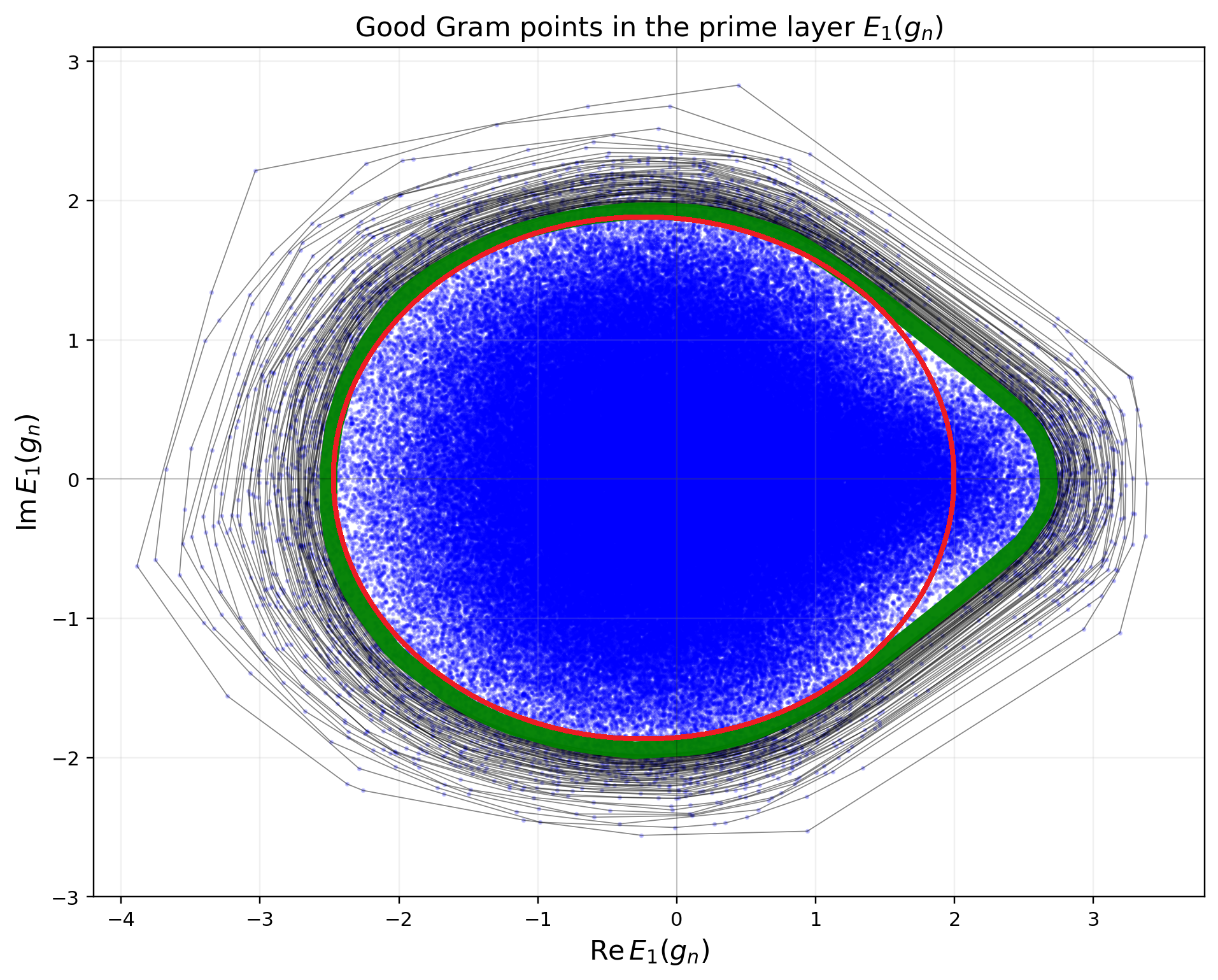}
        \caption{
        Prime-layer cloud \(E_1(g_n)\) for the good Gram points, together
        with its successive convex layers. The central region is nearly
        circular, while the outer layers display a persistent deformation
        in the positive-real direction.
        }
        \label{fig:good_prime_cloud}
\end{figure}
\medskip
It should be noted that there is a natural mechanism by which the Gram-sign
condition itself can produce the observed deformation in the positive-real
direction. The point is that goodness is defined by a sign condition on
\(Z(g_n)\), and at a Gram point this sign condition becomes, to leading order,
a condition on the \emph{real part} of the corresponding AFE sum. Indeed, at \(t=g_n\) we have
\[
        Z(g_n)
        =
        2(-1)^n\Re E(g_n)
        +
        O(g_n^{-1/4}),
\]
where
\[
        E(g_n)
        =
        1+E_1(g_n)+E_2(g_n)+\cdots.
\]
Since a good Gram point satisfies
\[
        (-1)^n Z(g_n)>0,
\]
the preceding approximation gives, schematically,
\[
        \Re E(g_n)>0.
\]
Thus the selection of good Gram points is not neutral with respect to the
complex AFE cloud. It preferentially selects those configurations for which
the total real component of the AFE sum is positive. Notice that this
constraint acts specifically in the real direction. There is no analogous
first-order sign restriction on the imaginary component. This already
suggests why the principal visible asymmetry of the good prime cloud should
occur horizontally rather than vertically.

To see how this condition can be transmitted to the prime layer, separate the
real part of the prime contribution from all remaining multiplicative
contributions. Write
\[
        X_n
        :=
        \Re E_1(g_n),
\]
and
\[
        R_n
        :=
        1+\sum_{j\geq2}\Re E_j(g_n).
\]
Then
\[
        \Re E(g_n)
        =
        X_n+R_n,
\]
and the good-Gram condition becomes, at the level of this approximation,
\[
        X_n+R_n>0.
\]
This formulation is useful because it makes clear that goodness does
\emph{not} require
\[
        X_n>0.
\]
A good Gram point may perfectly well have a negative prime contribution
\(X_n\), provided that the remaining contribution \(R_n\) is sufficiently
positive to compensate for it. Conversely, a large positive value of
\(X_n\) makes the inequality \(X_n+R_n>0\) easier to satisfy. Thus the
Gram-sign condition does not sharply truncate the prime cloud at
\(\Re E_1=0\). Rather, it changes the relative frequencies with which
different values of \(\Re E_1\) occur.

This can be expressed probabilistically. Suppose that \(X\) denotes the
prime-layer real coordinate and \(R\) the contribution of the remaining
layers. Before imposing the Gram-sign condition, let \(f_X(x)\) denote the
density of \(X\). After restricting to configurations satisfying
\[
        X+R>0,
\]
the density of \(X\) is reweighted according to
\[
        f_{X\,|\,X+R>0}(x)
        =
        \frac{
        f_X(x)\,
        \mathbb P(R>-x\,|\,X=x)
        }{
        \mathbb P(X+R>0)
        }.
\]
The important factor is therefore
\[
        \mathbb P(R>-x\,|\,X=x).
\]
For a fixed value \(X=x\), this is the probability that the remaining
multiplicative layers are large enough to make the total real part positive. The effect of this factor is if \(x\) is substantially
positive, then the requirement
\[
        R>-x
\]
is relatively mild and even moderately negative values of \(R\) are allowed.
If \(x\) is strongly negative, however, then \(R\) must be correspondingly
large and positive in order to rescue the Gram-sign condition. Such
configurations are naturally less common. Consequently, conditioning on
goodness tends to give greater statistical weight to configurations with
larger values of
\(
        X=\Re E_1(g_n)
\).
Even if the unconditioned prime cloud were approximately symmetric in the
real direction, the good-Gram selection can therefore produce an excess of
mass toward positive real values.
This gives a natural qualitative explanation for the rightward bump observed
in the good-point prime cloud. The nearly circular central bulk is consistent
with the underlying random-phase picture, in which the prime phases produce
an approximately isotropic planar sum. At the level of second-order statistics,
this geometry is described by the covariance matrix, which accounts for the
approximately circular spread of the cloud. While the rightward bump is of a different nature. The Gram-sign condition imposes
a one-sided constraint on the \emph{full} AFE sum, and this bias is already
partially visible in the prime component. Since the condition involves
\(X_n+R_n\), rather than \(X_n\) alone, it produces a smooth positive-real
deformation rather than a sharp cutoff. This asymmetry lies beyond the
second-order covariance description and should therefore be regarded as a
higher-order feature of the conditional prime-phase distribution.
\paragraph{Isolated bad Gram points: two transverse elliptic branches.}

The isolated-bad cloud presented in Fig.~\ref{fig:bad_prime_cloud}  exhibits a markedly different conditional
geometry. After separating the points into the upper and lower
half-planes, the onion layers of each lobe are strikingly close to
nested ellipses, with no comparable one-sided protrusion in the real
direction. Thus the quadratic large-deviation picture appears, at least
empirically, to describe each isolated-bad lobe considerably more
cleanly than it describes the entire good-point cloud.

\medskip

\begin{figure}[htbp]
        \centering
        \includegraphics[width=0.62\textwidth]{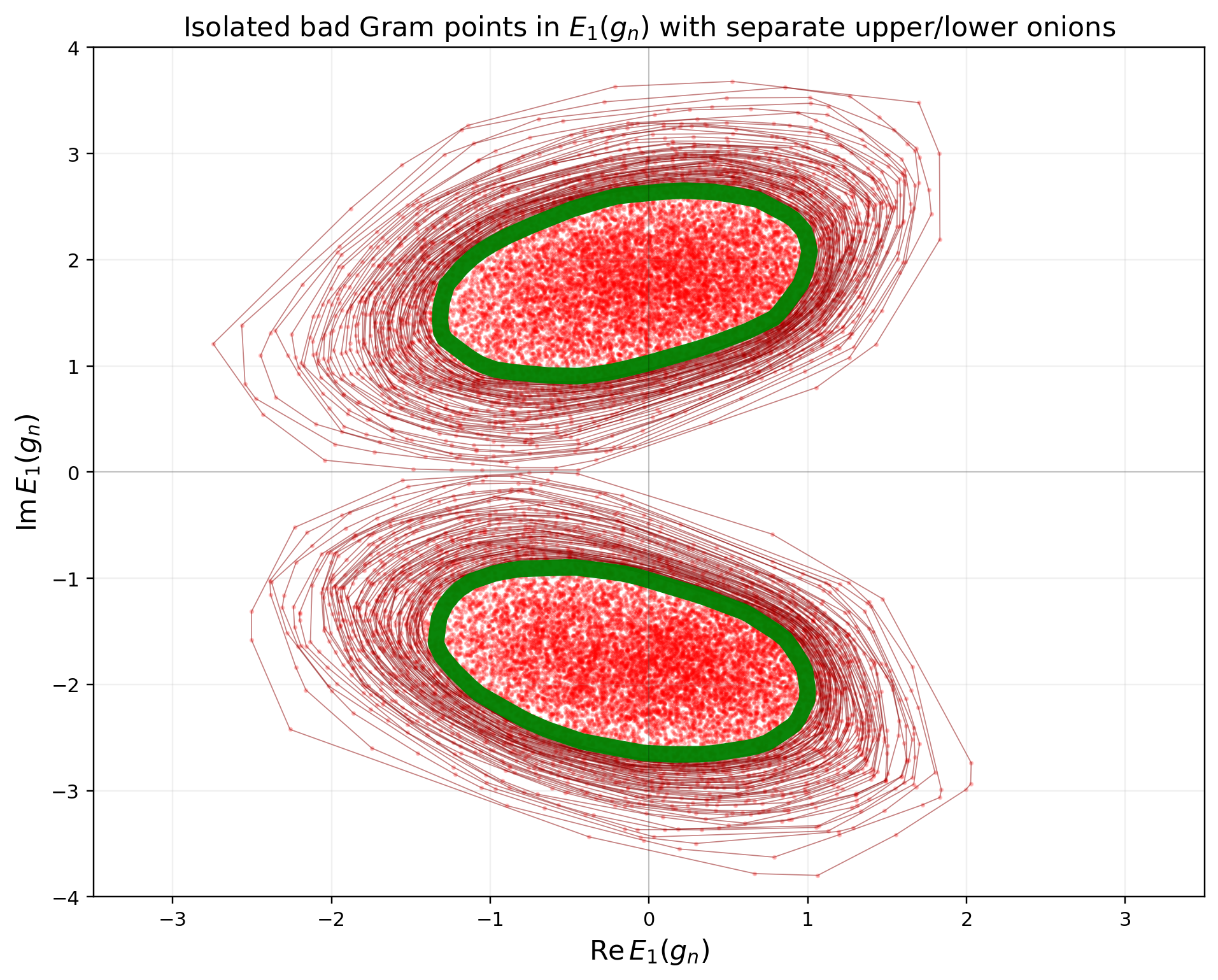}
        \caption{
        Prime-layer cloud \(E_1(g_n)\) for the isolated bad Gram points,
        together with the successive convex layers of the upper and lower
        lobes. Each lobe exhibits a strongly anisotropic, approximately
        elliptical geometry.
        }
        \label{fig:bad_prime_cloud}
\end{figure}

\medskip
The isolated-bad cloud appears to be governed much more strongly by its
second-order geometry. Thus, to a
first approximation, each lobe is well described by its mean and covariance
matrix. The displacement of the center determines the location of the lobe,
while the eigenvectors and eigenvalues of the covariance matrix determine its
preferred direction and degree of elongation.

This is notably different from the good-point cloud. There the covariance
captures the nearly circular bulk but not the additional rightward bump,
which is a genuinely higher-order asymmetry. For the isolated bad points,
by contrast, most of the visible structure is already present at the
second-order level. The principal phenomenon is therefore not a localized
deformation of an otherwise isotropic cloud, but the splitting into two
anisotropic branches, each of which is approximately elliptical.

\subsubsection{Higher multiplicative layers}

Figure~\ref{fig:Ej_montage} shows how the class-dependent geometry evolves
across the multiplicative layers \(E_j\). The strongest distinction between
good and isolated bad Gram points occurs in the prime layer \(E_1\). A related
structure remains visible in \(E_2\) and \(E_3\), but it weakens substantially
by \(E_4\) and \(E_5\). This also has a natural probabilistic interpretation. If
\[
        k=p_1\cdots p_j,
        \qquad \Omega(k)=j,
\]
then
\[
        e^{ig_n\log k}
        =
        \prod_{\nu=1}^{j} e^{ig_n\log p_\nu}.
\]
Hence
\[
        E_j(g_n)
        =
        \sum_{\substack{k\leq N(g_n)\\ \Omega(k)=j}}
        \frac{e^{ig_n\log k}}{\sqrt{k}}
\]
is built from degree-\(j\) products of the same primitive prime phases
\(e^{ig_n\log p}\). For \(j\geq2\), the summands therefore cease to behave
like new independent planar increments as different terms share prime factors,
and mixed moments and cumulants become increasingly important.

As the multiplicative depth grows, each phase combines more of the underlying
prime phases, and the resulting overlap can obscure the directional
information that is most clearly visible in \(E_1\). This provides a natural
mechanism for the progressive loss of class-dependent anisotropy seen in
Figure~\ref{fig:Ej_montage}. The large-scale geometry of the higher layers may
still be approximately described at second order by their conditional means
and covariance matrices, but the distinction between the good and isolated
bad classes becomes increasingly weak.

\section{The Bad Prime Lobes and Zeros Nearby to the Gram Point}
\label{sec:4}

Note that for an isolated bad Gram point \(g_n\), assuming the Riemann
Hypothesis, the interval
\(
[g_{n-1},g_{n+1}]
\)
contains two zeros of \(Z(t)\). Empirically, these two zeros lie on the
same side of \(g_n\) and either both lie in
\(
(g_n,g_{n+1}),
\)
or both lie in
\(
(g_{n-1},g_n).
\)
In fact, this dichotomy is directly responsible for the division into the two lobes of
the prime cloud. The upper lobe corresponds to one of these two local
zero configurations, while the lower lobe corresponds to the other.

Let
\(
        t_-<t_+
\)
denote the two zeros in \([g_{n-1},g_{n+1}]\). For a point in the upper
lobe, where both zeros lie to the right of \(g_n\), define
\[
        F_+(g_n)
        =
        \frac{
        \operatorname{dist}(t_-,g_{n+1})
        \operatorname{dist}(t_+,g_{n+1})
        }{
        \operatorname{dist}(t_-,g_n)
        \operatorname{dist}(t_+,g_n)
        }.
\]
Similarly, for a point in the lower lobe, where both zeros lie to the
left of \(g_n\), define
\[
        F_-(g_n)
        =
        \frac{
        \operatorname{dist}(t_-,g_{n-1})
        \operatorname{dist}(t_+,g_{n-1})
        }{
        \operatorname{dist}(t_-,g_n)
        \operatorname{dist}(t_+,g_n)
        }.
\]

The quantities \(F_\pm(g_n)\) are meant to give a simple measure of where the
two zeros sit between the bad Gram point and the neighboring good Gram point.
For the upper lobe, \(F_+(g_n)\) compares the pair's distance from
\(g_{n+1}\) with its distance from \(g_n\). For the lower lobe,
\(F_-(g_n)\) makes the analogous comparison with \(g_{n-1}\). Thus \(F_\pm(g_n)\) tells us whether the zero pair is positioned closer to the
central bad Gram point or closer to the outer neighboring Gram point. Small
values mean that the pair lies relatively close to the outer Gram point,
while large values mean that it lies relatively close to \(g_n\). The
quantity therefore serves as a simple scalar measure of the position of the
zero pair inside the corresponding half of the Gram interval. Figure~\ref{fig:extremal_lobes} displays the two isolated-bad prime
lobes
\[
\bigl(\Re E_1(g_n),\Im E_1(g_n)\bigr),
\]
colored according to \(F_+(g_n)\) on the upper lobe and
\(F_-(g_n)\) on the lower lobe. Warm colors correspond to small values
of \(F_\pm\), while cold colors correspond to large values. The forty
isolated bad Gram points having the smallest viscosities are marked
separately. The light curves indicate the approximate geometric
boundaries of the two lobes.

\begin{figure}[htbp]
    \centering
    \includegraphics[width=0.52\textwidth]{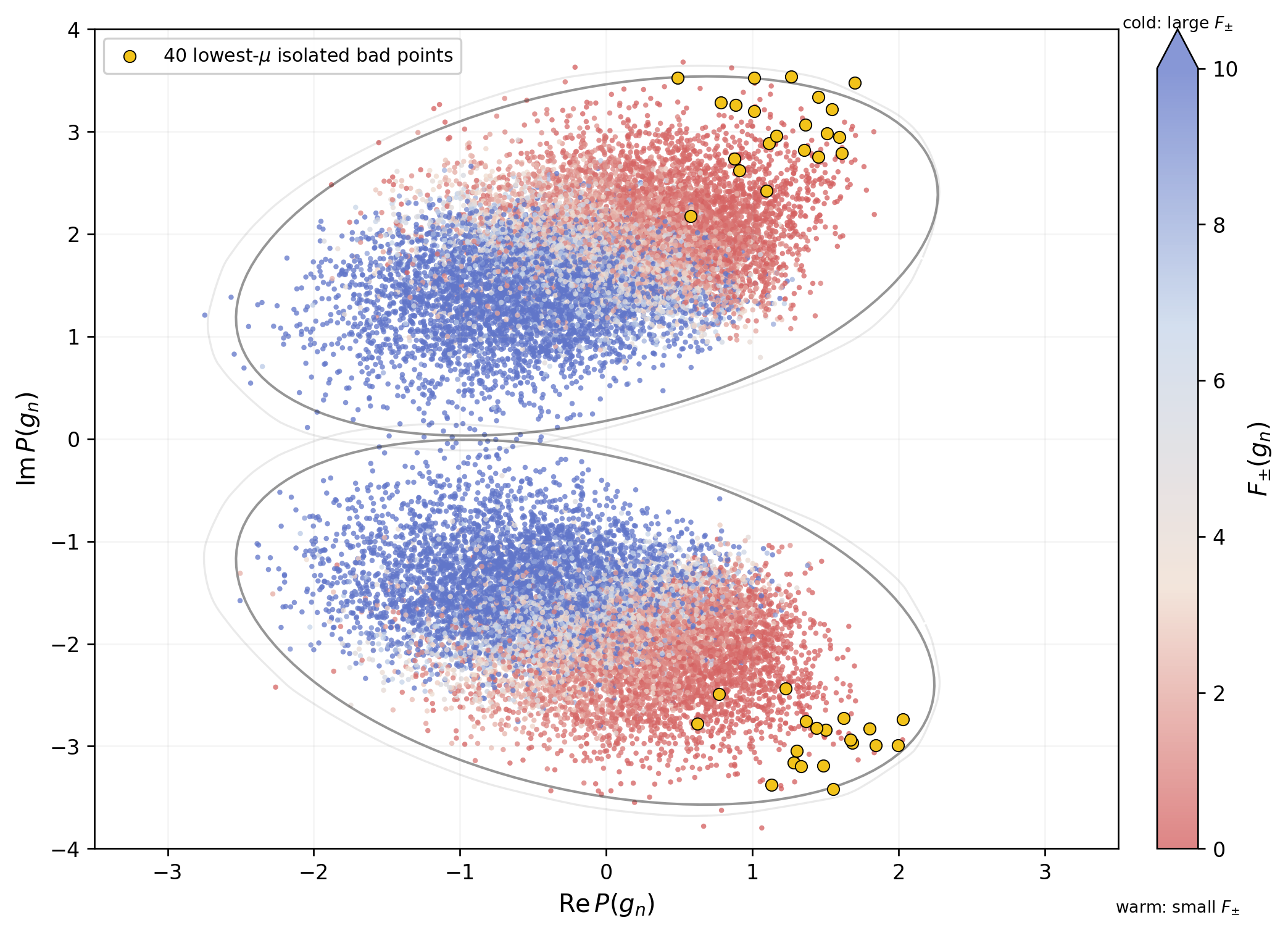}
    \caption{
    The two isolated-bad prime-cloud lobes colored according to the
    zero-position quantities \(F_\pm(g_n)\). The upper and lower lobes
    correspond to the two possible sides of \(g_n\) on which the local
    pair of zeros occurs. The \(40\) isolated bad Gram points with the
    smallest values of \(\mu(g_n)\) are marked by yellow circles.
    }
    \label{fig:extremal_lobes}
\end{figure}

Figure~\ref{fig:extremal_lobes} shows that the extremal configurations have a
particularly clear geometric interpretation. As one moves toward the
right-hand tip of either lobe, the corresponding zero pair is displaced
progressively toward the outer boundary of the local Gram interval. In the
upper lobe, this means that the rightmost zero \(t_+\) is pushed increasingly
close to \(g_{n+1}\). In the lower lobe, the leftmost zero \(t_-\) is pushed
increasingly close to \(g_{n-1}\).

The points with the smallest viscosities are concentrated precisely in these
extremal regions. Thus the regime in which \(\mu(g_n)\) approaches its
conjectural lower bound is not associated with an arbitrary rare prime-phase
configuration. Rather, it corresponds to a very specific zero geometry in
which one of the two zeros is pushed as far as possible toward the neighboring
good Gram point, that is, toward one of the walls \(g_{n\pm1}\) of the local
confined configuration.

To illustrate this extremal zero geometry directly, Fig.~\ref{fig:small_visc}
shows the graphs of \(\ln|Z(t)|\) across the local Gram interval
\([g_{n-1},g_{n+1}]\) for the two representative lowest-viscosity configurations $n=8569,96237$.
The two confined zeros are marked in each panel.  In these examples the pair
is visibly displaced toward the $g_{n-1}$ Gram wall, making concrete
the geometric interpretation suggested by the prime-cloud extremal regions.
\begin{figure}[H]
    \centering
    \includegraphics[width=0.62\textwidth]
{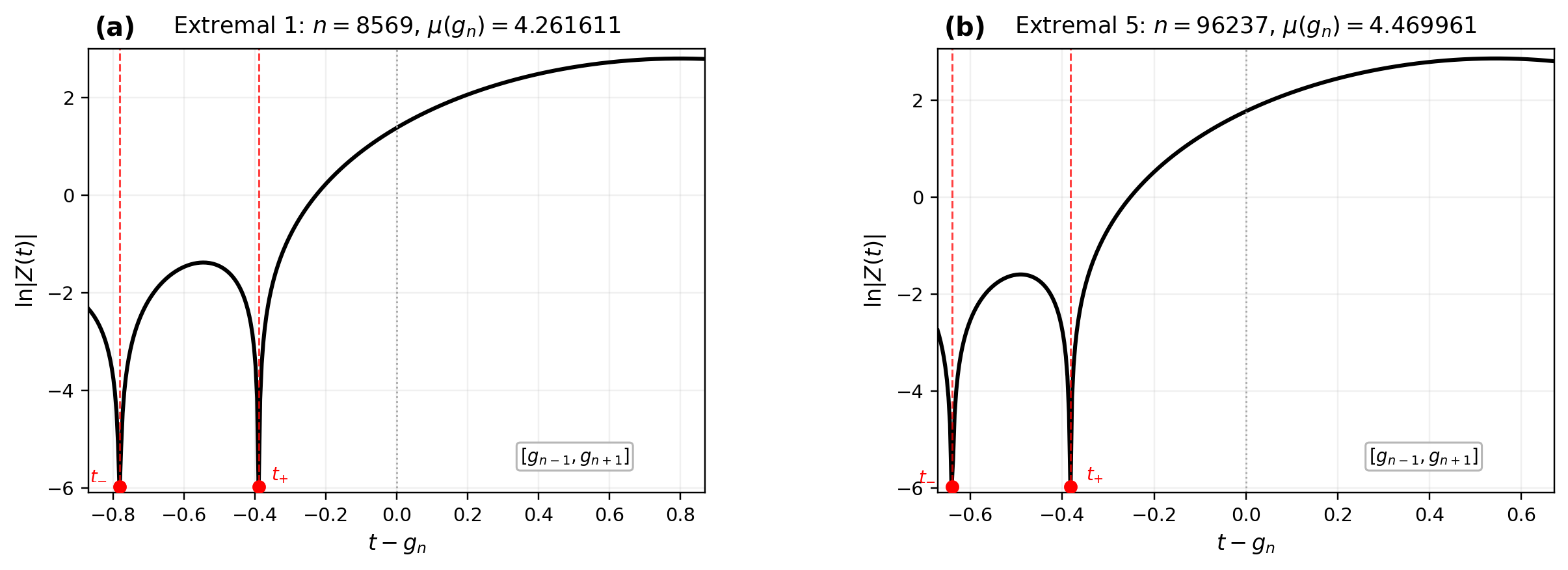}
    \caption{
    The local graphs  \(\ln|Z(t)|\) across the local Gram interval
\([g_{n-1},g_{n+1}]\) for the two representative lowest-viscosity configurations $n=8569,96237$.
    The two confined zeros are marked in red.  Their displacement toward the $g_{n-1}$ Gram point illustrates the characteristic zero geometry
    associated with viscosities close to the conjectural lower bound.
    }
    \label{fig:small_visc}
\end{figure}
\begin{remark}[Partial-sum trunks near extremal configurations] For \(t\in\mathbb R\), define the partial AFE sums
\[
        S_j(t)
        :=
        \sum_{k=1}^{j}
        \frac{e^{it\log k}}{\sqrt{k}},
        \qquad
        1\le j\le N(t).
\]
Thus \(S_{N(t)}(t)=E(t)\).  The curve
\(
        \{S_j(t):1\le j\le N(t)\}
\)
will be called the \emph{partial-sum trunk} at \(t\). We use these trunks only as a diagnostic.  For an isolated bad Gram point
\(g_n\), we compare the three trunks at
\(
        g_{n-1}, g_n, g_{n+1}
\).
Since
\(
        g_{n+1}-g_n
        \sim
        \frac{\pi}{\theta'(g_n)} \) and
       \(
        \theta'(g_n)
        \sim
        \frac12\log\!\left(\frac{g_n}{2\pi}\right)
\),
one Gram step changes the \(k\)-th phase by
\(
        (g_{n+1}-g_n)\log k
        \sim
        \pi\frac{\log k}{\log N}
\).
Thus low-index terms rotate very little from one Gram point to the next,
whereas terms near \(k=N\) rotate by almost \(\pi\).  This simple phase shear
is useful for visualizing how cancellation at the central bad point may be
converted into reinforcement at one of the neighboring good Gram points. Figure~\ref{fig:generic_trunks} shows representative isolated bad Gram points
away from the lowest-viscosity regime.  
\begin{figure}[H]
    \centering
    \includegraphics[width=0.75 \textwidth]{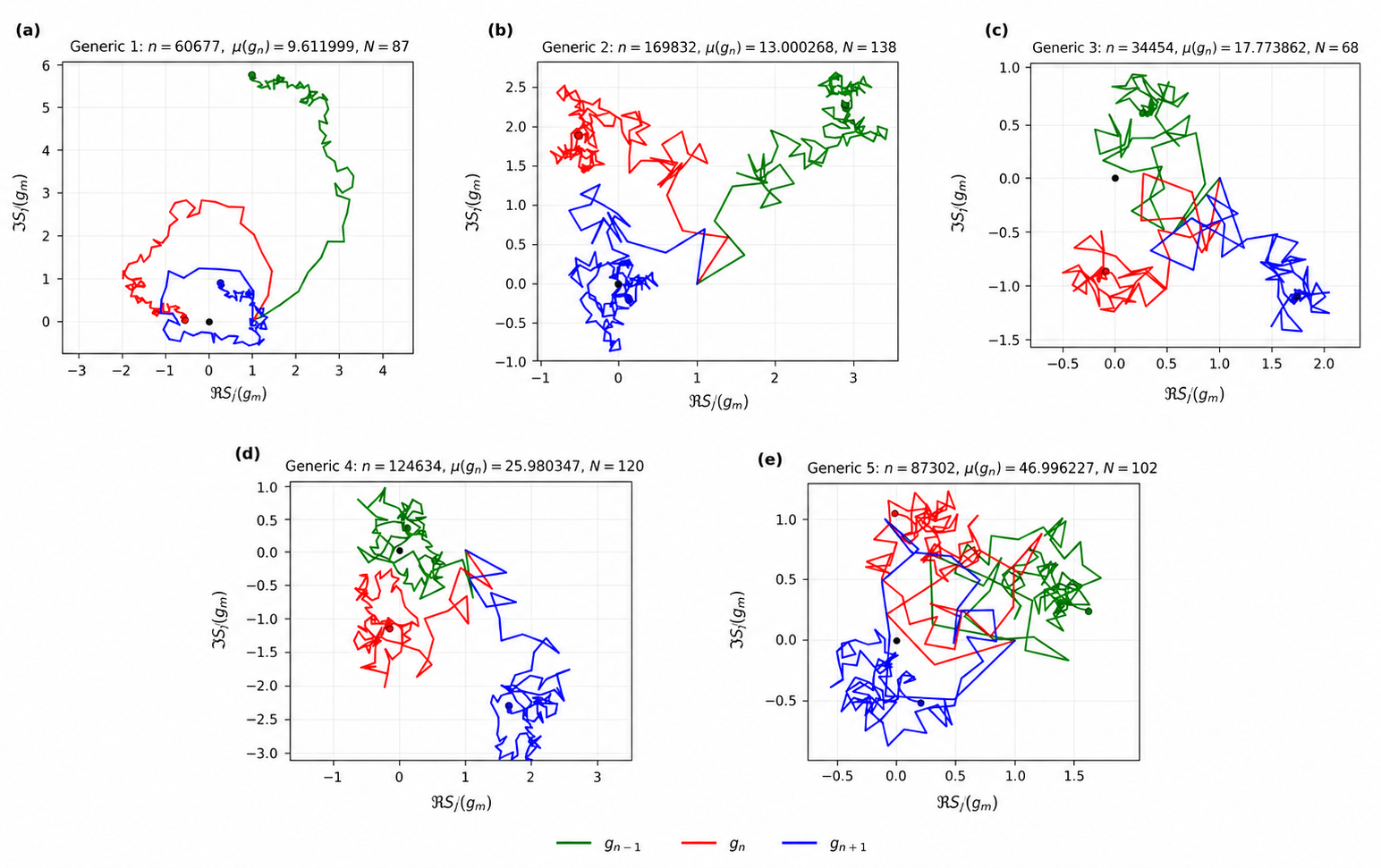}
    \caption{Trunks for five representative non-extremal isolated
    bad Gram points \(n=60677,169832,34454,124634,87302\).  For each triple,
    the trajectories \(S_j(g_{n-1})\), \(S_j(g_n)\), and \(S_j(g_{n+1})\) are
    shown in green, red, and blue, respectively.  Filled circles mark the
    terminal values \(S_N(g_m)\).}
    \label{fig:generic_trunks}
\end{figure}

No uniform three-trunk pattern is
visible in these generic examples.
A more coherent pattern appears among some of the isolated bad Gram points
with very small viscosity.  Figure~\ref{fig:low_visc_trunks} shows the five
smallest-viscosity examples in the present data set.

\begin{figure}[htbp]
    \centering
    \includegraphics[width=0.75\textwidth]{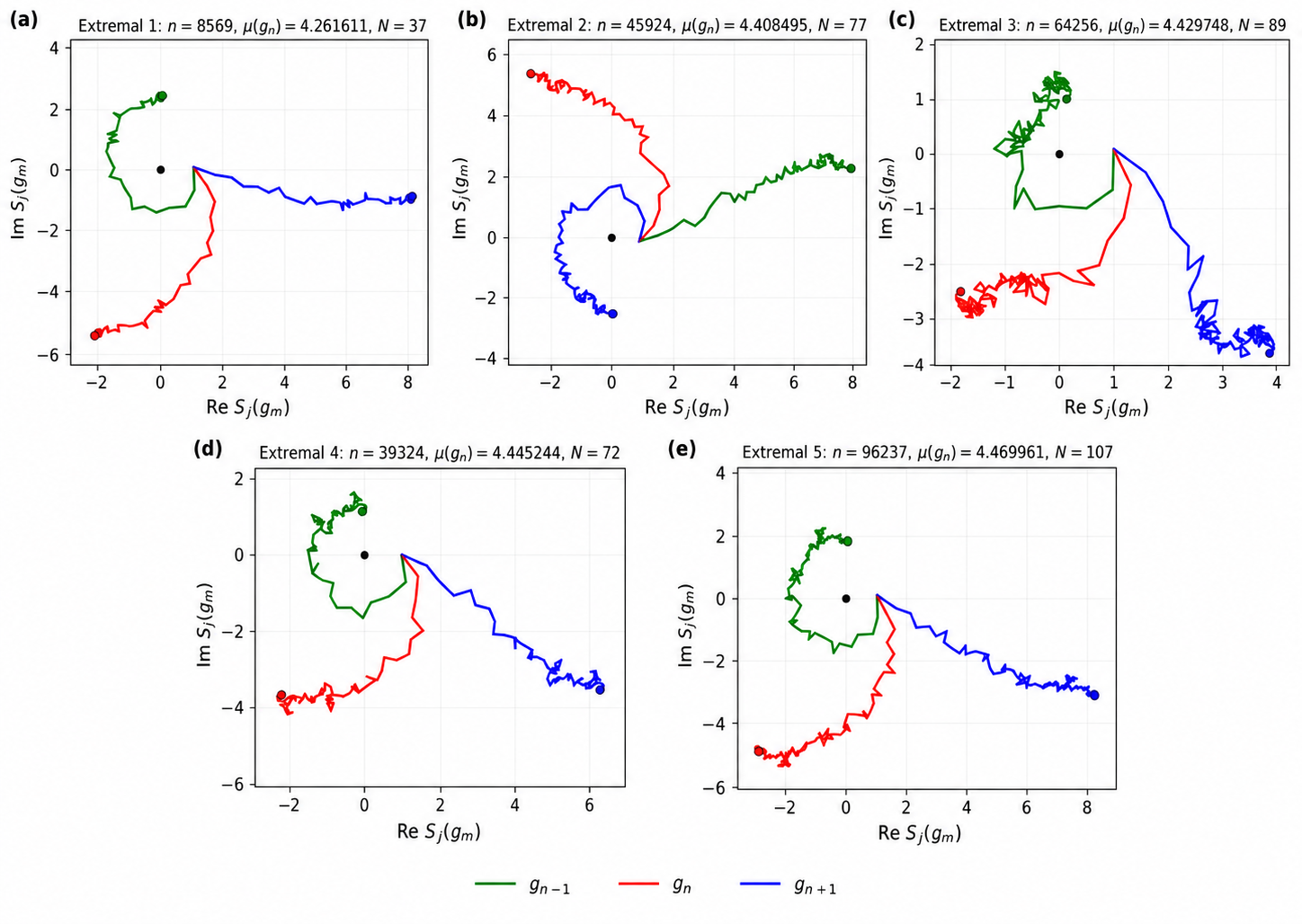}
    \caption{Trunks for the five lowest-viscosity isolated bad Gram
    points \(n=8569,45924,64256,39324,96237\).  The colors are as in
    Figure~\ref{fig:generic_trunks}.}
    \label{fig:low_visc_trunks}
\end{figure}

In these examples, one neighboring trunk terminates near a comparatively
small endpoint, while the opposite neighboring trunk reaches a much larger
endpoint.  Writing
\[
        A_{\mathrm{act}}
        :=
        \max\{|Z(g_{n-1})|,|Z(g_{n+1})|\},
\]
the observed coherent configurations suggest the rough slope estimate
\[
        |Z'(g_n)|
        \asymp
        \frac{A_{\mathrm{act}}}{g_{n+1}-g_n}
        \sim
        \frac{\theta'(g_n)}{\pi}A_{\mathrm{act}}.
\]
Consequently,
\[
        \mu(g_n)
        =
        \left|\frac{Z'(g_n)}{Z(g_n)}\right|
        \asymp
        \frac{\theta'(g_n)}{\pi}
        \frac{A_{\mathrm{act}}}{|Z(g_n)|}.
\]
This gives an intuitive explanation for why very small viscosity is difficult
to achieve and how the bound arises in such coherent configurations. The same phase organization that
produces cancellation at the central bad point tends, after one Gram step, to
produce a compensating neighboring excursion, and this excursion is measured
on the derivative scale by division by the Gram spacing.

We emphasize that this is only a qualitative mechanism.  We do not expect
every extremal point to arise from such a coherent three-trunk
configuration.  The figures of such coherent configurations should therefore be read as evidence for one
visible source of the viscosity barrier, not as a classification of all
near-extremal isolated bad Gram points.
\end{remark}

\subsection{The distribution of $\mu(g_n^{DH})$ for isolated bad DH Gram points}

Recall that Fig.~\ref{fig:viscosity_distributions} revealed a striking
separation between the viscosity distributions of good and isolated bad Gram
points for $Z(t)$.  While the good-point distribution is broad and extends
across a wide range of values, the isolated-bad distribution develops a sharp
lower edge immediately above
\(
\mu=4.
\)
In the data considered there, no isolated bad Gram point associated with a
confined pair was observed below this threshold, motivating the repulsion Conjecture.

We now perform the analogous comparison for the Davenport--Heilbronn
function. Fig.~\ref{fig:dh_visc_dist} shows the
corresponding empirical distributions of
$\ln\mu_{DH}(g_n^{DH})$ over our dataset.

\begin{figure}[H]
\centering
\includegraphics[width=0.58\textwidth]{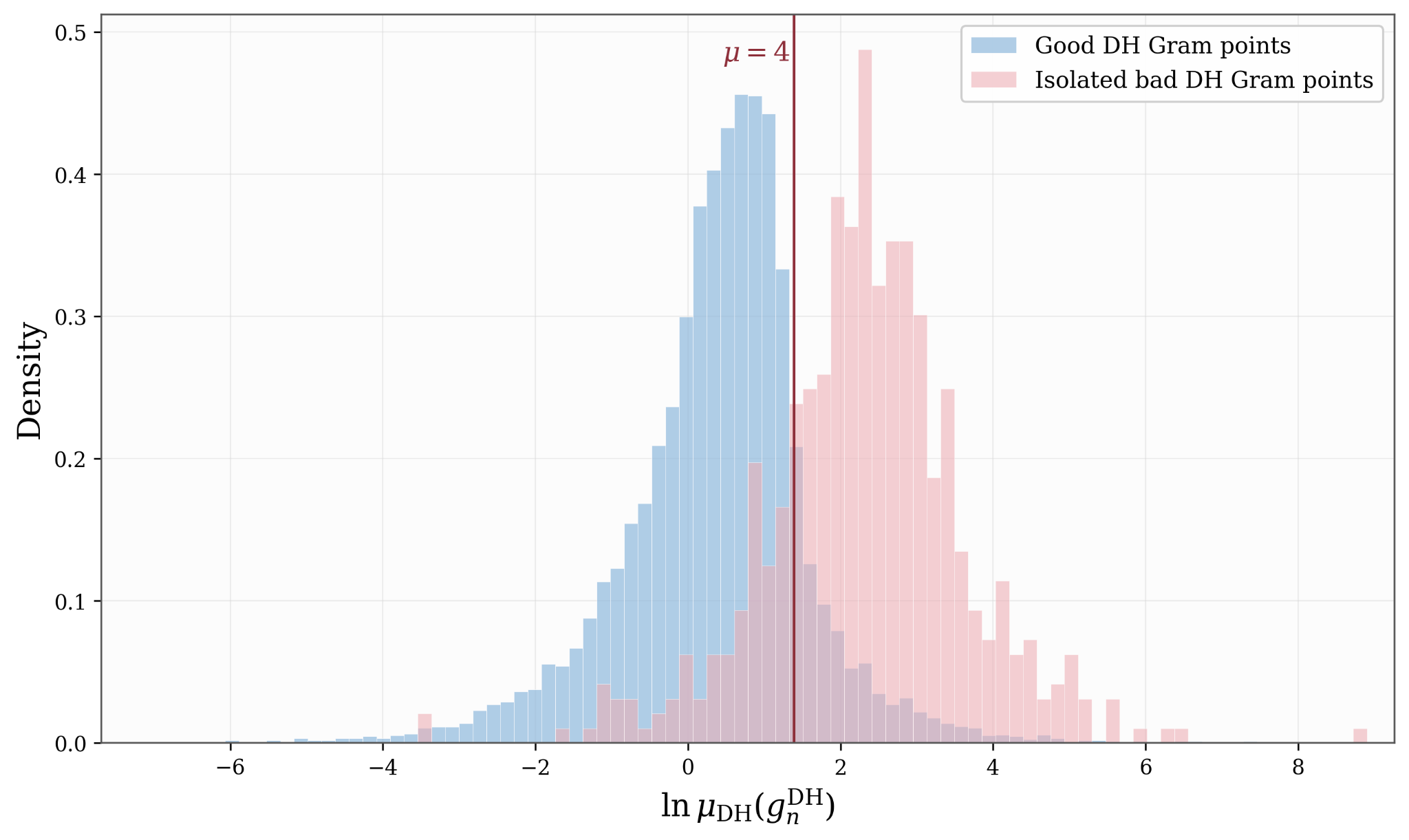}
\caption{Empirical distributions of
$\ln\mu_{DH}(g_n^{DH})$ for the $11\,525$ good DH Gram points (blue) and
$751$ isolated bad DH Gram points (red) in the data set
$2\leq n\leq12501$.  The vertical line marks $\mu=4$.  In sharp contrast
with Fig.~\ref{fig:viscosity_distributions}, the isolated-bad DH distribution
extends substantially to the left of this value and exhibits no lower
boundary at $\mu=4$.}
\label{fig:dh_visc_dist}
\end{figure}

The contrast with the Hardy $Z$-function is immediate.  For $Z(t)$, the
isolated-bad population is excluded from the region $\mu\leq4$ and is
strongly depleted near the boundary itself.  For the DH function, by
contrast, the isolated-bad distribution has substantial mass below
$\mu=4$ and overlaps broadly with the distribution of the good points.
Thus there is no visible analogue of the viscosity bound in the DH data.

This shows that the example discussed in Subsection~\ref{ss:2.3} is not an
isolated exception.  The failure of the viscosity relation is a statistical
feature of the DH function.  In particular, an isolated bad DH configuration
associated with a confined pair need not develop the strong transverse
response observed for $Z(t)$ and its viscosity may remain small enough for the
pair to reach the discriminant, collide, and leave the real axis.  The
difference between the two functions therefore appears already at the level
of their local Gram-point dynamics, well before one invokes the global fact
that the Davenport--Heilbronn function violates the analogue of the Riemann
Hypothesis.
\subsection{The $DH$-prime clouds $E_1(g_n^{DH})$}

In the $Z$-case, Fig.~\ref{fig:prime_cloud}
showed a highly structured picture with the isolated bad Gram points separated into two
distinct lobes, corresponding to the upper and lower parts of the folded
prime lung.  In particular, the isolated-bad set itself exhibited a clear
two-component organization.

For the Davenport--Heilbronn function, the situation is different.
Figure~\ref{fig:dh_dense_onions} shows the DH-weighted prime projection of
the same type of data.  The isolated bad DH Gram points, unlike in
the $Z$-case, do \emph{not} split into two separated prime-cloud
components.  Rather, they form a single coherent cloud.  For visual
convenience, we color the points with $\Im P_n^{DH}>0$ in orange and those
with $\Im P_n^{DH}<0$ in green, but this coloring should not be interpreted
as indicating two distinct geometric lobes analogous to the two lobes in
Fig.~\ref{fig:prime_cloud}.  The DH isolated-bad set is better regarded as
one cloud crossing the line $\Im P_n^{DH}=0$.

\begin{figure}[H]
\centering
\includegraphics[width=0.48\textwidth]{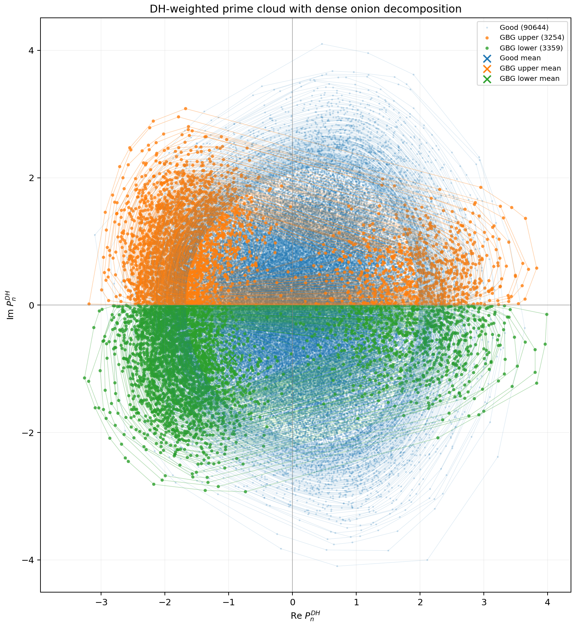}
\caption{The DH-weighted prime cloud with dense onion decomposition.
Blue points are good DH Gram points.  The isolated bad DH Gram points are
shown in orange and green according to the sign of $\Im P_n^{DH}$, but this
coloring is only for visual convenience. Unlike the isolated-bad cloud in
Fig.~\ref{fig:prime_cloud}, these points do not split into two separated
lobes, and instead form a single coherent cloud crossing
$\Im P_n^{DH}=0$.}
\label{fig:dh_dense_onions}
\end{figure}

Thus, from the perspective developed here, the DH function is seen to differ from the
Hardy $Z$-function in two complementary ways.  First, its local zero dynamics
do not exhibit the viscosity-bound phenomenon.  Second, the geometry of its
isolated-bad prime cloud differs qualitatively from the two-lung structure
observed for $Z(t)$.

\section{Theoretical Mechanisms}
\label{sec:mechanisms}
In this paper we have observed, empirically, a close relation between three
features of confined bad Gram points: the viscosity \(\mu(g_n)\), the local
configuration of zeros near \(g_n\), and the position of \(E_1(g_n)\) inside
the bad prime lobes.  In this section we discuss the theoretical mechanisms
behind these observations.  The observations themselves appear to be new, but
they are not detached from the standard analytic structure of the zeta
function. There are two closely
related mechanisms behind them.  First, the viscosity
\(\mu(g_n)=|Z'(g_n)/Z(g_n)|\) is naturally controlled by zeros lying close to
\(g_n\).  Second, by the explicit formula, local zero statistics are dual to
weighted prime-power sums at the same height. 

\subsection{Viscosity and local zero configurations}

Recall that
\[
        Z(t)=e^{i\theta(t)}\zeta\!\left(\frac12+it\right).
\]
At points where \(Z(t)\neq 0\), logarithmic differentiation gives
\[
        \frac{\zeta'}{\zeta}\!\left(\frac12+it\right)
        =
        -\theta'(t)-i\frac{Z'(t)}{Z(t)} .
\]
Thus the viscosity
\[
        \mu(t)
        =
        \left|\frac{Z'(t)}{Z(t)}\right|
\]
is the absolute value of the imaginary part of the logarithmic derivative of
\(\zeta\) on the critical line.

On the other hand, the logarithmic derivative is governed locally by the
nearby zeros.  In the range where the zeros on the critical line dominate the
local contribution, one has the heuristic approximation
\[
        \frac{Z'(t)}{Z(t)}
        \approx
        \sum_{\gamma}
        \frac{1}{t-\gamma}
        +
        \text{slowly varying terms},
\]
where the sum is effectively dominated by zeros \(\gamma\) close to \(t\).
Consequently, \(\mu(g_n)\) should be especially sensitive to the configuration
of the nearest zeros around the Gram point \(g_n\).

This is particularly transparent for isolated bad Gram points associated with
a confined pair of zeros.  Suppose that the two relevant zeros lie in the same
neighboring Gram interval on one side of \(g_n\).  If
\[
        \Delta_n
        =
        \frac{2\pi}{\log(g_n/2\pi)}
\]
denotes the local mean zero spacing, and if the two distances from \(g_n\) are
written in normalized form as
\[
        g_n-\gamma_1 = a_1\Delta_n,
        \qquad
        g_n-\gamma_2 = a_2\Delta_n,
\]
then the contribution of this pair to the normalized viscosity is of the form
\[
        \frac{2\pi}{\log(g_n/2\pi)}
        \left|
        \frac{1}{g_n-\gamma_1}
        +
        \frac{1}{g_n-\gamma_2}
        \right|
        =
        \left|
        \frac{1}{a_1}
        +
        \frac{1}{a_2}
        \right|.
\]
For a confined pair in a single neighboring Gram interval one expects
\(0<a_1,a_2\leq 1\), and therefore this contribution is at least \(2\), with
equality corresponding to the extremal case in which both zeros are pushed
toward the far endpoint of that interval.  This gives the basic intuition
behind the normalized lower scale
\[
        \frac{2\pi}{\log(g_n/2\pi)}\mu(g_n)
        \gtrsim 2
\]
for isolated bad Gram points of confined type.

It is important to note that this argument should not be read as a proof of the observed lower envelope.
It keeps only the contribution of the confined zero pair and does not yet
account for the nearby zeros outside the Gram block
\([g_{n-1},g_{n+1}]\), nor for the more distant zeros and slowly varying
background terms in the logarithmic derivative.  Nevertheless, it explains
why the viscosity is a natural quantity for detecting the local zero
configuration, and why extremal low-viscosity bad points should correspond to
highly coherent arrangements of the confined zero pair.
\subsection{Weighted prime sums and the logarithmic derivative}

We now turn to the second part of the mechanism: why should the prime cloud
remember information about the viscosity and the local zero configuration?
The point is that the prime phases appearing in \(E_1(g_n)\) are closely
related to the prime phases appearing in the logarithmic derivative of
\(\zeta\). For \(\Re s>1\), the Euler product gives the absolutely convergent identity
\[
        -\frac{\zeta'}{\zeta}(s)
        =
        \sum_{m=1}^{\infty}
        \frac{\Lambda(m)}{m^s},
\]
where \(\Lambda(m)\) is the von Mangoldt function.  Formally substituting
\(s=\frac12+it\) suggests that \(Z'(t)/Z(t)\) should be related to the
imaginary part of a prime-power phase sum:
\[
        \frac{Z'(t)}{Z(t)}
        \approx
        -\Im
        \sum_{m\le X}
        \frac{\Lambda(m)}{\sqrt m}
        e^{it\log m}.
\]
Equivalently,
\[
        \frac{Z'(t)}{Z(t)}
        \approx
        -\sum_{m\le X}
        \frac{\Lambda(m)}{\sqrt m}
        \sin(t\log m).
\]
Since \(\Lambda(m)\) is supported on prime powers, this sum may be written as
\[
        \sum_{m\le X}
        \frac{\Lambda(m)}{\sqrt m}
        e^{it\log m}
        =
        \sum_{p^k\le X}
        \frac{\log p}{p^{k/2}}
        e^{ikt\log p}.
\]
Thus the logarithmic derivative naturally involves the same prime phases
\(e^{it\log p}\), but with logarithmic weights, prime-power contributions,
and a cutoff scale \(X\).

Of course, this formal Euler-product expression is not an absolutely
convergent identity on the critical line.  A correct version must be stated
with smoothing and with the corresponding zero terms.  In the spirit of
Selberg's formula, one obtains expressions of the form
\[
        \frac{\zeta'}{\zeta}(s)
        =
        -
        \sum_{m\le X}
        \frac{\Lambda_X(m)}{m^s}
        +
        \text{zero terms}
        +
        \text{gamma and trivial-zero terms},
        \qquad
        s=\frac12+it,
\]
where \(\Lambda_X(m)\) is a smoothed von Mangoldt weight.  Taking imaginary
parts gives the schematic relation
\[
        \frac{Z'(t)}{Z(t)}
        =
        -\Im
        \sum_{m\le X}
        \frac{\Lambda_X(m)}{\sqrt m}
        e^{it\log m}
        +
        \text{zero corrections}
        +
        \text{smooth error terms}.
\]
This is the prime-side counterpart of the zero-side fact that
\(Z'(t)/Z(t)\) is controlled by nearby zeros. The prime cloud used in this paper is simpler.  It is defined by
\[
        E_1(g_n)
        =
        \sum_{p\leq N(g_n)}
        \frac{e^{ig_n\log p}}{\sqrt p}.
\]
This is not the exact prime sum appearing in the logarithmic derivative:
it omits the logarithmic weight \(\log p\), ignores prime powers \(p^k\) with
\(k\ge2\), and uses the AFE cutoff \(N(g_n)\) rather than a smoothed
explicit-formula cutoff.  Nevertheless, it is built from the same fundamental
phases
\[
        e^{ig_n\log p}.
\]
Thus \(E_1(g_n)\) may be viewed as an unweighted prime-only simplified projection of the
weighted prime-power data which appears naturally in the logarithmic
derivative.

This gives the conceptual bridge.  The relation between \(\mu(g_n)\) and the
nearby zeros is natural because \(Z'(g_n)/Z(g_n)\) is a logarithmic derivative.
The relation between nearby zeros and weighted prime-power sums is natural
because of the explicit formula.  What is not automatic, and what is isolated
empirically in this paper, is that the much simpler unweighted prime cloud
\(\mathcal P_N\), built from \(E_1(g_n)\), still retains a visible trace of
the same local zero geometry.

\section{Summary and Concluding Remarks}
\label{s:summary}
The Riemann zeta function has a long experimental tradition, beginning
with Riemann's unpublished investigations, following \cite{Riemann1859}. Gram points have
played a central role in this tradition, from the original computations of
Gram and Hutchinson \cite{Gram1903,Hutchinson} to the work of Turing,
Lehmer, and Rosser, Yohe, and Schoenfeld
\cite{Lehmer1956,RosserYoheSchoenfeld,Turing1953}.
Odlyzko's investigations of zero spacings demonstrated how numerical
experiments can reveal statistical structure beyond the verification
of individual zeros \cite{Odlyzko1987}, while Shanker's study of the
good-to-bad Gram point ratio continued this into an initial statistical exploration
at the level of Gram's law \cite{ShankerGramRatio}.

 In this work, we have introduced and experimentally studied the full AFE cloud \(\mathcal C_N\)
and its prime component \(\mathcal P_N\), as defined in \eqref{eq:clouds}, asking how good and bad
Gram points differ in their geometry. While the separation in the
full cloud follows naturally from the AFE, the pronounced bad lobes
in the prime cloud lead to the discovery of a more surprising structure. We have
then investigated how position within these lobes relates to the
viscosity
\[
        \mu(g_n)=\left|\frac{Z'(g_n)}{Z(g_n)}\right|
\]
and to the configuration of nearby zeros. 
Our experiments show
a close connection between these three features: prime-cloud
geometry, viscosity, and local zero configurations. Theoretically, although the explicit
formula couples zeros to weighted prime-power sums, it does not by
itself explain the bad lobes in \(\mathcal P_N\) or their internal
geometric ordering. Understanding why such information survives in
the simpler prime cloud, remains an open problem.

Finally, although the variational
approach developed in  \cite{JerbyDiscriminant} did not initially make its arithmetic connection to the zeros
of \(Z(t)\) explicit, the present experiments reveal a substantial
connection with prime phases and the arithmetic structure of the
zeta function. Understanding this connection may help bring the
geometry of the variation space and the arithmetic of primes into
a common framework. In particular, 
 the original motivation for
studying viscosity $\mu (g_n)$ of bad Gram points was to seek decompositions of the variation space
\(\mathcal Z_N\) in which the discriminant admits a natural expression.
It would be interesting to investigate whether the prime interpretation
developed here can guide such a decomposition.

\end{document}